\newcommand{\beq}{\begin{equation}}
\newcommand{\eeq}{\end{equation}}
\newcommand{\beqa}{\begin{eqnarray}}
\newcommand{\eeqa}{\end{eqnarray}}
\newcommand{\beqas}{\begin{eqnarray*}}
\newcommand{\eeqas}{\end{eqnarray*}}
\newcommand{\bi}{\begin{itemize}}
\newcommand{\ei}{\end{itemize}}
\newcommand{\ba}{\begin{array}}
\newcommand{\ea}{\end{array}}
\newcommand{\nn}{\nonumber}
\def\eqnok#1{(\ref{#1})}
\def\argmin{{\rm argmin}}
\def\Argmin{{\rm Argmin}}
\def\exp{{\rm exp}}
\def\vgap{\vspace*{.1in}}
\def\setU{{X}}
\def\R{{\Re}}
\def\SO{{\rm SO}}
\def\ProxS{{\rm PS}}
\def\SProxS{{\rm SPS}}
\def\tx{{\tilde x}}
\def\ty{{\tilde y}}
\def\hu{{u^*}}
\def\cX{{\cal X}}
\def\ux{{\underline x}}
\def\bx{{\bar x}}
\newcommand{\bbe}{\mathbb{E}}
\def\prob{\mathop{\rm Prob}}
\newcommand{\bbr}{\mathbb{R}}
\def\w{\omega}
\title{
Gradient Sliding for Composite Optimization
\thanks{The author of this paper was partially supported by
    NSF grant CMMI-1000347, DMS-1319050, 
    ONR grant N00014-13-1-0036 and
    NSF CAREER Award CMMI-1254446.}}
\author{
    Guanghui Lan
    \thanks{Department of Industrial and Systems
    Engineering, University of Florida, Gainesville, FL, 32611.
    (email: {\tt glan@ise.ufl.edu}).}
}
\begin{document}

\maketitle

\begin{abstract}
We consider in this paper a class of composite optimization problems whose objective function
is given by the summation of a general smooth and nonsmooth component, together with
a relatively simple nonsmooth term. We present a new class of first-order
methods, namely the gradient sliding algorithms, which can skip the computation of the
gradient for the smooth component from time to time. As a consequence, 
these algorithms require only ${\cal O}(1/\sqrt{\epsilon})$ gradient evaluations for
the smooth component in order to find an $\epsilon$-solution for the composite problem, while
still maintaining the
optimal ${\cal O}(1/\epsilon^2)$ bound on the total number of subgradient 
evaluations for the nonsmooth component.
We then present a stochastic counterpart for these algorithms and establish 
similar complexity bounds for solving an important class of stochastic composite optimization problems. 
Moreover, if the smooth component in the composite function is strongly convex, 
the developed gradient sliding algorithms can significantly reduce
the number of graduate and subgradient evaluations for the smooth and nonsmooth component
to ${\cal O} (\log (1/\epsilon))$ and ${\cal O}(1/\epsilon)$,
respectively. 
Finally, we generalize these algorithms to the case when the smooth component is replaced by a nonsmooth
one possessing a certain bi-linear saddle point structure.

\vspace{.1in}

\noindent {\bf Keywords:} convex programming, complexity, 
gradient sliding, Nesterov's method, data analysis

\vspace{.07in}

\noindent {\bf AMS 2000 subject classification:} 90C25, 90C06, 90C22, 49M37

\end{abstract}

\vspace{0.1cm}

\setcounter{equation}{0}
\section{Introduction}
In this paper, we consider a class of composite convex programming (CP) problems given in the form of
\beq \label{cp}
\Psi^* \equiv \min_{x \in X} \left\{ \Psi(x) := f(x) + h(x) + \cX(x) \right\}.
\eeq
Here, $X \subseteq \bbr^n$ is a closed convex set,  $\cX$ is relatively simple convex function, and $f: X \to \bbr$ 
and $h: X \to \bbr$, respectively, are general smooth and
nonsmooth convex functions satisfying
\begin{align}
f(x) & \le f(y) + \langle \nabla f(y), x - y \rangle + \frac{L}{2} \|x - y\|^2, \,  \forall x, y \in X, \label{smoothf1}\\
h(x) &\le h(y) + \langle h'(y), x - y \rangle + M \|x - y\|, \,  \forall x, y \in X, \label{nonsmoothh1}
\end{align}
for some $L > 0$ and $M > 0$, where $h'(x) \in \partial h(x)$. 
Composite problem of this type
appears in many data analysis applications, where either $f$ or $h$
corresponds to a certain data fidelity term, while the other components in $\Psi$ denote regularization
terms used to enforce certain structural properties for the obtained solutions.

Throughout this paper, we assume that
one can access the first-order information of $f$ and $h$ separately. More specifically,
in
the deterministic setting, we can compute the
exact gradient $\nabla f(x)$ and a subgradient $h'(x) \in \partial h(x)$ for any $x \in X$.
We also consider the stochastic situation where only a
stochastic subgradient of the nonsmooth component $h$ is
available.
The main goal of this paper to provide a better theoretical understanding on how many number of 
gradient evaluations of $\nabla f$ and subgradient evaluations
of $h'$ are needed in order to find a certain approximate solution of \eqnok{cp}.

Most existing
first-order methods for solving \eqnok{cp} require the computation
of both $\nabla f$ and $h'$ in each iteration. In particular,
since the objective function $\Psi$ in \eqnok{cp} is nonsmooth, these
algorithms would require ${\cal O}(1/\epsilon^2)$ first-order iterations, and hence ${\cal O}(1/\epsilon^2)$
evaluations
for both $\nabla f$ and $h'$ to find an $\epsilon$-solution
of \eqnok{cp}, i.e., a point $\bar x \in X$ s.t. $\Psi(\bar x) - \Psi^* \le \epsilon$. Much
recent research effort has been directed to reducing the impact of the Lipschitz constant
$L$ on the aforementioned complexity bounds for composite optimization.  
For example, Juditsky, Nemirovski and Travel showed in \cite{jnt08} that
by using a variant of the mirror-prox method,
the number of evaluations for $\nabla f$ and $h'$ required
to find an $\epsilon$-solution of \eqnok{cp} can be bounded
by
\[
{\cal O} \left(\frac{L_f}{\epsilon} + \frac{M^2}{\epsilon^2} \right).
\]
By developing an enhanced version of
Nesterov's accelerated gradient method~\cite{Nest83-1,Nest04}, Lan~\cite{Lan10-3} further showed that
the above bound can be improved to 
\beq \label{sub_bnd}
{\cal O} \left(\sqrt{\frac{L_f}{\epsilon}} + \frac{M^2}{\epsilon^2} \right).
\eeq
It is also shown in \cite{Lan10-3} that similar bounds hold for the stochastic case
where only unbiased estimators for $\nabla f$ and $h'$ are available.
It is observed in \cite{Lan10-3} that such a complexity bound is not improvable if 
one can only access the first-order information for the summation of $f$ and $h$ all together.

Note, however, that it is unclear whether the complexity bound in \eqnok{sub_bnd} is optimal if
one does have access to the
first-order information of $f$ and $h$ separately. In particular, one would  expect that the number of evaluations 
for $\nabla f$ can be bounded by
${\cal O}(1/\sqrt{\epsilon})$, 
if the nonsmooth term $h$ in \eqnok{cp}
does not appear (see \cite{Nest07-1,tseng08-1,BecTeb09-2}). However, it is unclear whether such a bound 
still holds for the more general composite problem in \eqnok{cp} without significantly increasing the bound
in \eqnok{sub_bnd} on the number of subgradient
evaluations for $h'$.
It should be pointed out that in many applications the bottleneck of first-order methods exist in the 
computation of $\nabla f$ rather than that of $h'$. To motivate our study, let us mention a few such examples.
\begin{itemize}
\item [a)] In many inverse problems, we need to enforce certain block sparsity (e.g., total variation and
overlapped group Lasso) by solving the problem of 
$
\min_{x \in \bbr^n} \|Ax - b\|_2^2 + r(B x).
$
Here $A: \bbr^n \to \bbr^m$ is a given linear operator, $b \in \bbr^m$ denotes the collected observations,
$r: \bbr^p \to \bbr$ is a relatively simple nonsmooth convex function (e.g., $r = \|\cdot\|_1$), and $B:\bbr^n \to \bbr^p$
is a very sparse matrix. In this case, evaluating the gradient of $\|A x - b\|^2$ requires
${\cal O}(m n)$ arithmetic operations, while the computation of $r'(B x)$ 
only needs ${\cal O}(n + p)$ arithmetic operations.
\item [b)] In many machine learning problems, we need to minimize a regularized
loss function given by
$
\min_{x \in \bbr^n} \bbe_\xi [l(x, \xi)] + q(Bx).
$
Here $l: \bbr^n \times \bbr^d \to \bbr$ denotes a certain simple loss function, $\xi$ is
a random variable with unknown distribution, $q$ is a certain smooth convex function,
and $B:\bbr^n \to \bbr^p$ is a given linear operator. In this case, the computation of the stochastic subgradient
for the loss function $ \bbe_\xi [l(x, \xi)]$ requires only ${\cal O}(n+d)$ arithmetic operations,
while evaluating the gradient of $q(B x)$ needs ${\cal O}(n p)$ arithmetic operations.
\item [c)] In some cases, the computation of $\nabla f$ involves a black-box simulation procedure,
the solution of an optimization problem, or a partial differential equation, 
while the computation of $h'$ is given explicitly.
\end{itemize}
In all these cases mentioned above, it is desirable to reduce the number of gradient evaluations
of $\nabla f$ to improve the overall efficiency for solving the composite problem \eqnok{cp}.

Our contribution can be briefly summarized as follows.
Firstly, we present a new class of first-order methods, namely the gradient sliding
algorithms, and show that the number of gradient evaluations for $\nabla f$ 
required by these algorithms to find an $\epsilon$-solution of \eqnok{cp} can
be significantly reduced from \eqnok{sub_bnd} to 
\beq \label{bnd_grad}
{\cal O} \left( \sqrt{\frac{L}{\epsilon}}\right),
\eeq
while the total number of subgradient evaluations for $h'$ is still bounded
by \eqnok{sub_bnd}. The basic scheme of these algorithms is to skip the computation
of $\nabla f$ from time to time so that only ${\cal O}(1/\sqrt{\epsilon})$ gradient
evaluations are needed in the ${\cal O}(1/\epsilon^2)$ iterations required
to solve \eqnok{cp}. Such an algorithmic framework originated from the simple idea of incorporating
an iterative procedure to solve the subproblems in the aforementioned accelerated proximal gradient
methods, although the analysis of these gradient sliding algorithms appears to be more technical and involved.


Secondly, we consider the stochastic case where the nonsmooth term $h$ is represented by
a stochastic oracle ($\SO$), which,
for a given search point $u_t \in X$, outputs a vector $H(u_t, \xi_t)$ such that (s.t.)
\begin{align}
\bbe[H(u_t, \xi_t)] = h'(u_t) \in \partial h(u_t), \label{assum_FO_a} \\
\bbe[\|H(u_t, \xi_t) - h'(u_t) \|_*^2] \le \sigma^2, \label{assum_FO_b}
\end{align}
where $\xi_t$ is a random vector independent of the search points $u_t$.
Note that $H(u_t, \xi_t)$ is referred to as a stochastic subgradient of
$h$ at $u_t$ and its computation is often much cheaper than the exact subgradient
$h'$. Based on the gradient sliding techniques, we
develop a new class of stochastic approximation type algorithms and show that
 the total number gradient evaluations
of $\nabla f$ required by these algorithms to find a stochastic $\epsilon$-solution of \eqnok{cp},
i.e., a point $\bar x \in X$ s.t. $\bbe[\Psi(\bar x) - \Psi^*] \le \epsilon$, can 
still be bounded by \eqnok{bnd_grad}, while the total number of 
stochastic subgradient evaluations can be bounded by
\[
{\cal O} \left( \sqrt{\frac{L}{\epsilon}} + \frac{M^2 + \sigma^2}{\epsilon^2}\right).
\]
We also establish large-deviation results associated with these 
complexity bounds under certain ``light-tail'' assumptions on the stochastic
subgradients returned by the $\SO$.

Thirdly, we generalize the gradient sliding algorithms for solving two important 
classes of composite problems given in the form of \eqnok{cp}, but with $f$ satisfying
additional or alterative assumptions. We first assume that $f$ is not only smooth,
but also strongly convex, and show that the number of evaluations for
$\nabla f$ and $h'$ can be significantly reduced 
from ${\cal O}(1/\sqrt{\epsilon})$ and ${\cal O}(1/\epsilon^2)$, respectively,
to ${\cal O} (\log (1/\epsilon))$ and ${\cal O}(1/\epsilon)$. 
We then consider the case when $f$ is nonsmooth, but can be closely approximated by
a class of smooth functions. By incorporating a novel smoothing scheme due to Nesterov~\cite{Nest05-1}
into the gradient sliding algorithms, we show that the number of gradient evaluations
can be bounded by ${\cal O}(1/\epsilon)$, while the optimal ${\cal O}(1/\epsilon^2)$ bound
on the number of subgradient evaluations of $h'$ is still retained.
 
This paper is organized as follows. In Section~\ref{sec_prelim}, we provide some preliminaries on the
prox-functions and a brief review on
existing proximal gradient methods for solving \eqnok{cp}. In Section~\ref{sec_DGS}, we present the gradient sliding
algorithms and establish their convergence properties for solving problem \eqnok{cp}.
Section~\ref{sec_SGS} is devoted to stochastic gradient sliding algorithms for solving a class of stochastic
composite problems. In Section~\ref{sec_gen}, we generalize the gradient sliding algorithms for the situation 
where $f$ is smooth and strongly convex, and for the case when $f$ is nonsmooth but can be closely approximated
by a class of smooth functions. Finally, some concluding remarks are made in Section~\ref{sec_remark}.

\vgap

\noindent {\bf Notation and terminology.}
We use $\|\cdot\|$ to denote an arbitrary norm in $\bbr^n$,
which is not necessarily associated the inner product $\langle \cdot, \cdot \rangle$.
We also use $\|\cdot\|_*$ to denote the conjugate of $\|\cdot\|$.
For any $p \ge 1$, $\|\cdot\|_p$ denotes the standard $p$-norm in  $\bbr^n$, i.e.,
\[
\|x\|_p^p =  \sum_{i=1}^n |x_i|^p, \qquad \mbox{for any } x \in \bbr^n.
\]
For any convex function $h$, $\partial h(x)$ is the set of subdifferential at $x$.
Given any $X \subseteq \bbr^n$, we say a convex function $h:X \to \bbr$ is nonsmooth
if $|h(x) - h(y)| \le M_h \|x - y\|$ for any $x, y \in X$. In this case, it can be shown that
\eqnok{nonsmoothh1} holds with $M = 2 M_h$ (see Lemma 2 of \cite{Lan10-3}).
We say that a convex function $f: X \to \bbr$ is smooth if it
is Lipschitz continuously differentiable with Lipschitz constant $L>0$, i.e.,
$
\|\nabla f(y) - \nabla f(x)\|_* \le L\|y-x\|$
for any $x, y \in X$,
which clearly implies \eqnok{smoothf1}.

For any real number $r$, $\lceil r \rceil$ and $\lfloor r \rfloor$ denote the nearest integer to
$r$ from above and below, respectively. $\bbr_+$ and $\bbr_{++}$, respectively, denote the set of nonnegative 
and positive real numbers. ${\cal N}$ denotes the set of natural numbers $\{1, 2, \ldots\}$.

\setcounter{equation}{0}
\section{Review of the proximal gradient methods}
In this section, we provide a brief review on the proximal gradient methods from
which the proposed gradient sliding algorithms originate, and point out a few problems associated with
these existing algorithms when applied to solve problem \eqnok{cp}.

\subsection{Preliminary: distance generating function and prox-function}\label{sec_prelim}
In this subsection, we review the concept of prox-function
(i.e., proximity control function), which plays an important role in the recent development
of first-order methods for convex programming. The goal of using the prox-function
in place of the usual Euclidean distance is to allow the developed algorithms to
get adapted to the geometry of the feasible sets.

We say that a function $\w:\,X\to \bbr$ is a {\em
distance generating function} with modulus $\nu>0$
with respect to $\|\cdot\|$,
if $\w$ is continuously differentiable and strongly convex with parameter
$\nu$ with respect to  $\|\cdot\|$, i.e.,
\begin{equation}\label{s27}
 \langle x-z, \nabla \w(x)-\nabla\w(z) \rangle \geq\nu
\|x-z\|^2,\;\;\forall x,z\in X.\end{equation}
\par
The {\em prox-function} associated with $\w$ is given by
\begin{equation}\label{s450}
V(x,z) \equiv V_\w(x,z)=\w(z)-[\w(x)+\langle \nabla \w(x), z-x \rangle].
\end{equation}
The prox-function $V(\cdot,\cdot)$ is also called the Bregman's distance,
which was initially studied by Bregman \cite{Breg67} and later by many others
(see \cite{AuTe06-1,BBC03-1,Kiw97-1} and references therein).
In this paper, we assume that the prox-function $V(x,z)$ is chosen such
that the solution of
\beq \label{prox_mapping}
\arg\min\limits_{u \in X} \left\{ \langle g, u \rangle
+ V(x, u) + \cX(u) \right\}
\eeq
is easily computable for any $g \in {\cal E}^*$ and $x \in X$.
Some examples of these prox-functions are given in \cite{GhaLan12-2a}.

If there exists a constant ${\cal Q}$ such that $V(x, z) \le {\cal Q} \|x - z\|^2 / 2$
for any $x, z\in X$, then we say that the prox-function $V(\cdot, \cdot)$ is growing
quadratically. Moreover, the smallest constant ${\cal Q}$ satisfying the previous
relation is called the {\sl quadratic growth constant} of $V(\cdot, \cdot)$. 
Without loss of generality, we assume that ${\cal Q} = 1$ for the
prox-function $V(x,z)$ if it grows quadratically, i.e.,
\beq \label{quad_grow}
V(x, z) \le \frac{1}{2} \|x - z\|^2, \ \ \ \forall x, z \in X.
\eeq
Indeed, if ${\cal Q} \neq 1$, we can multiply the
corresponding distance generating function $\w$ by $1/{\cal Q}$ and the
resulting prox-function will satisfy \eqnok{quad_grow}.

\subsection{Proximal gradient methods}
In this subsection, we briefly review a few possible first-order methods
for solving problem \eqnok{cp}. 

We start with the simplest proximal 
gradient method which works for the case when the nonsmooth component $h$ 
does not appear or is relatively simple
(e.g., $h$ is affine). For a given $x \in X$, let
\beq \label{def_big_phik}
m_\Psi(x, u) := l_f(x, u) + h(u) + \cX(u), \ \ \forall u \in X,
\eeq
where
\beq \label{def_lf}
l_f(x;y) := f(x) + \langle\nabla f(x), y - x \rangle.
\eeq
Clearly, by the convexity of $f$ and \eqnok{smoothf1}, we
have
\begin{align*}
m_\Psi(x, u) &\le \Psi(u) \le m_\Psi(x, u) + \frac{L}{2} \|u - x\|^2 
\le m_\Psi(x, u) +  \frac{L}{2 \nu} V(x, u) 
\end{align*}
for any $u \in X$, where the last inequality follows 
from the strong convexity of $\w$. Hence, 
$m_\Psi(x,u)$ is a good approximation of $\Psi(u)$ when $u$ is ``close"
enough to $x$. In view of this observation, we update the search point
$x_{k} \in X$ at the $k$-th iteration of the proximal gradient method by
\beq \label{define-prox-mapping}
x_{k} = \argmin_{u \in X} \left\{l_f(x_{k-1}, u) + h(u) + \cX(u) + \beta_k V(x_{k-1}, u) \right\},
\eeq
Here, $\beta_k > 0$ is a parameter which determines how well we ``trust" the 
proximity between $m_\Psi(x_{k-1}, u)$ and $\Psi(u)$. In particular, a larger value of
$\beta_k$ implies less confidence on $m_\Psi(x_{k-1},u)$ and results in a smaller step 
moving from $x_{k-1}$ to $x_{k}$. It is well-known that the number of iterations
required by the proximal gradient method for finding an $\epsilon$-solution
of \eqnok{cp} can be bounded by ${\cal O}(1/\epsilon)$.

The efficiency of the above proximal gradient method can be significantly improved
by incorporating a multi-step acceleration scheme.
The basic idea of this scheme is to introduce
three closely related search sequences, namely, $\{\ux_k\}$, $\{x_k\}$, and $\{\bx_k\}$,
which will be used to build the model $m_\Psi$, control the proximity
between $m_\Psi$ and $\Psi$, and compute the output solution, respectively. More specifically,
these three sequences are updated according to
\begin{align}
\ux_k &= (1 - \gamma_k)  \bx_{k-1} + \gamma_k x_{k-1}, \label{acc1} \\
x_k & = \argmin_{u \in X} \left\{\Phi_k(u) :=  l_f(\ux_k, u) + h(u) + \cX(u) + \beta_k V(x_{k-1}, u) \right\}, \label{acc2}\\
\bx_k &= (1-\gamma_k) \bx_{k-1} + \gamma_k x_k, \label{acc3}
\end{align}
where  $\beta_k \ge 0$ and $\gamma_k \in [0,1]$ are given parameters for the algorithm.
Clearly, \eqnok{acc1}-\eqnok{acc3} reduces to \eqnok{define-prox-mapping},
if $\bx_0 = x_0$ and $\gamma_k$ is set to a constant, i.e., $\gamma_k =\gamma$ for some $\gamma \in [0,1]$
for all $k \ge 1$. However, by properly specifying $\beta_k$ and $\gamma_k$, e.g.,
$\beta_k = 2 L / k$ and $\gamma_k = 2/(k+2)$, one can show that the above
accelerated proximal gradient method can find an $\epsilon$-solution of \eqnok{cp}
in at most ${\cal O}(1/\sqrt{\epsilon})$ iterations. Since each iteration of
this algorithm requires only one evaluation of $\nabla f$, the total number of gradient evaluations
of $\nabla f$ can also be bounded by ${\cal O}(1/\sqrt{\epsilon})$.

One crucial problem associated with the aforementioned proximal gradient type methods
is that the subproblems \eqnok{define-prox-mapping} and \eqnok{acc2}
are difficult to solve when $h$ is a general nonsmooth convex function. 
To address this issue, one can possibly apply an enhanced 
accelerated gradient method by Lan~\cite{Lan10-3} (see also \cite{GhaLan12-2a,GhaLan13-1}).
This algorithm is obtained by replacing 
$h(u)$ in \eqnok{acc2} with
\beq \label{def_lh}
l_h(\ux_k;u) := h(\ux_k) + \langle h'(\ux_k), u - \ux_k \rangle
\eeq
for some $h'(\ux_k) \in \partial h(\ux_k)$.
As a result, the subproblems in this algorithm become easier to solve. Moreover, 
with a proper selection of $\{\beta_k\}$ and $\{\gamma_k\}$, 
this approach can find an $\epsilon$-solution of \eqnok{cp} 
in at most
\beq \label{iter_bnd}
{\cal O} \left\{\sqrt{\frac{L V(x_0, x^*)}{\epsilon}} + \frac{M^2 V(x_0, x^*) }{\epsilon^2} \right\}
\eeq
iterations. Since each iteration requires one computation of $\nabla f$ and $h'$,
the total number of evaluations for $f$ and $h'$ is bounded by ${\cal O}(1/\epsilon^2)$. 
As pointed out in \cite{Lan10-3}, this bound in \eqnok{iter_bnd} is not improvable if one can only 
compute the subgradient of the composite function $f(x) + h(x)$ as a whole. 
However, as noted in Section 1, we do have access to separate first-order information 
about $f$ and $h$
in many applications.
One interesting problem is whether we can further improve the performance of 
proximal gradient type methods in the latter case.

\setcounter{equation}{0}
\section{Deterministic gradient sliding} \label{sec_DGS}
Throughout this section, we consider the deterministic case where exact subgradients of $h$ are available.
By presenting a new class of proximal gradient methods, namely the gradient sliding (GS) method,
we show that one can significantly 
reduce the number of gradient evaluations for $\nabla f$ required to solve \eqnok{cp}, while
maintaining the optimal bound on the total number of subgradient evaluations for $h'$.

The basic idea of the GS method is to incorporate an iterative procedure to
approximately solve the subproblem \eqnok{acc2} 
in the accelerated proximal gradient methods.
A critical observation in our development of the GS method is that
one needs to compute a pair of closely related approximate solutions of
problem \eqnok{acc2}. One of them will be used in place of $x_k$ in \eqnok{acc1}
to construct the model $m_\Psi$, while the other one will be used in place of
$x_k$ in \eqnok{acc3} to compute the output solution $\bx_k$.
Moreover, we show that such a pair of approximation solutions can 
be obtained by applying a simple subgradient projection type subroutine.
We now formally describe this algorithm as follows.

\begin{algorithm} [H]
	\caption{The gradient sliding (GS) algorithm}
	\label{algGeneric}
	\begin{algorithmic}
\State 
\noindent {\bf Input:} Initial point $x_0 \in X$ and iteration limit $N$.

\State Let $\beta_k \in \bbr_{++}, \gamma_k \in \bbr_+$, and $T_k \in {\cal N}$, $k = 1, 2, \ldots$,
be given and
set $\bar x_0 = x_0$. 
 
\For {$k=1, 2, \ldots, N$ }
\State 1. Set $\underline x_k = (1 - \gamma_k) \bar x_{k-1} + \gamma_k x_{k-1}$, and let 
$g_k(\cdot) \equiv l_f(\underline x_{k-1}, \cdot)$ be defined in \eqnok{def_lf}.
\State 2. Set 
\beq \label{def_ut}
(x_k, \tilde x_k) = \ProxS(g_k, x_{k-1}, \beta_k, T_k);
\eeq
\State 3. Set $\bar x_k = (1-\gamma_k) \bar x_{k-1} + \gamma_k \tilde x_k$. 
\EndFor
\State 
\noindent {\bf Output:} $\bx_N$.

\Statex
\Statex The  $\ProxS$ (prox-sliding) procedure called at step 2 is stated as follows.\Procedure {$(x^+, \tilde x^+) = \ProxS$}{$g$, $x$, $\beta$, $T$}\State Let the parameters $p_t \in \bbr_{++}$ and $\theta_t \in [0,1]$, $t = 1, \ldots$, be given. Set 
 $u_{0} = \tilde u_0 = x$.
\State {\bf for} $t = 1, 2, \ldots, T$ {\bf do}
\begin{align}
u_{t} &= \argmin_{u \in X} \left\{ g(u) + l_h(u_{t-1},u) + \beta V(x, u) + \beta p_t V(u_{t-1}, u) + \cX(u) \right\},\label{def_ut_iter}\\
\tilde u_t &=  (1-\theta_t) \tilde u_{t-1} + \theta_t u_t. \label{def_but}
\end{align}
\State {\bf end for}
\State Set $x^+ = u_T$ and  $\tilde x^+ = \tilde u_T$.\EndProcedure
	\end{algorithmic}
\end{algorithm}

Observe that when supplied with an 
affine function $g(\cdot)$, prox-center $x \in X$, parameter $\beta$, and sliding period $T$, 
the $\ProxS$ procedure computes a
pair of approximate solutions $(x^+, \tilde x^+) \in X \times X$ for the
problem of:
\beq \label{generic_subproblem}
\argmin_{u \in X} \left\{\Phi(u) := g(u) + h(u) + \beta V(x, u) + \cX(u) \right\}.
\eeq
Clearly, problem \eqnok{generic_subproblem} is equivalent to \eqnok{acc2}
when the input parameters are set to \eqnok{def_ut}.
Since the same affine function $g(\cdot) = l_f(\ux_{k-1}, \cdot)$ has been used throughout
the $T$ iterations of the $\ProxS$ procedure, we skip 
the computation of the gradients of $f$ when performing the $T$ projection steps
in \eqnok{def_ut_iter}. This differs from the 
accelerated gradient method in~\cite{Lan10-3}, where one needs to
compute $\nabla f + h'$ in each projection step.

\vgap

A few more remarks about the above GS algorithm are in order.
Firstly, we say that an outer iteration of the GS algorithm occurs whenever $k$ in Algorithm~\ref{algGeneric}
increments by $1$. Each outer iteration of the GS algorithm involves the computation of the gradient $\nabla f(\ux_{k-1})$
and a call to the $\ProxS$ procedure to update $x_k$ and $\tilde x_k$. 
Secondly, the $\ProxS$ procedure solves problem~\eqnok{generic_subproblem} iteratively.
Each iteration of this procedure consists
of the computation of subgradient $h'(u_{t-1})$ and the solution of the projection subproblem~\eqnok{def_ut_iter},
which is assumed to be relatively easy to solve (see Section~\ref{sec_prelim}).
For notational convenience, we refer to an iteration of the $\ProxS$ procedure as an inner iteration of the GS algorithm. 
Thirdly, the GS algorithm described above
is conceptual only since we have not yet specified the selection of 
$\{\beta_k\}$, $\{\gamma_k\}$, $\{T_k\}$, $\{p_t\}$ and $\{\theta_t\}$. We
will return to this issue after establishing some convergence properties
of the generic GS algorithm described above.

\vgap

We first present a result which summarizes some important convergence properties
of the $\ProxS$ procedure. 
The following two technical results are needed to establish the convergence
of this procedure.

The first technical result below characterizes
the solution of the projection step \eqnok{def_ut}.
The proof of this result can be found in Lemma 2 of \cite{GhaLan12-2a}.

\begin{lemma} \label{tech1_prox}
Let the convex function $q: X \to \bbr$, the points
$\tx, \ty \in X$ and the scalars $\mu_1, \mu_2 \in \bbr_+$
be given.
Let $\w:\setU \to \bbr$ be a differentiable convex function and
$V(x, z)$ be defined in \eqnok{s450}. If
\[
\hu \in \Argmin \{ q(u) + \mu_1 V(\tx, u) + \mu_2 V(\ty, u) : u \in \setU\},
\]
then for any $ u \in X$, we have
\[
q(\hu) + \mu_1 V(\tx, \hu) + \mu_2 V(\ty, \hu)
 \le q(u) + \mu_1 V(\tx, u) + \mu_2 V(\ty, u) - (\mu_1 + \mu_2) V(\hu, u).
\]
\end{lemma}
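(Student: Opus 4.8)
\textbf{Proof plan for Lemma~\ref{tech1_prox}.}
The plan is to derive the inequality directly from the first-order optimality condition for the strongly convex minimization problem defining $\hu$, together with the three-point identity for the Bregman distance $V(\cdot,\cdot)$. First I would write down the optimality condition: since $\hu$ minimizes $\phi(u) := q(u) + \mu_1 V(\tx,u) + \mu_2 V(\ty,u)$ over the convex set $\setU$, there exists a subgradient $q'(\hu) \in \partial q(\hu)$ such that
\[
\langle q'(\hu) + \mu_1 \nabla_u V(\tx,\hu) + \mu_2 \nabla_u V(\ty,\hu), u - \hu \rangle \ge 0, \qquad \forall u \in X,
\]
where $\nabla_u V(x,\cdot)$ denotes the gradient of $V(x,\cdot)$ in its second argument, namely $\nabla_u V(x,u) = \nabla \w(u) - \nabla \w(x)$ by \eqnok{s450}.

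Next I would record the key three-point identity that holds for any Bregman distance generated by $\w$: for all $x, u, \hu \in X$,
\[
V(x,u) = V(x,\hu) + V(\hu,u) + \langle \nabla \w(\hu) - \nabla \w(x), u - \hu \rangle,
\]
which follows by expanding each $V$-term via its definition \eqnok{s450} and cancelling. Applying this with $x = \tx$ and with $x = \ty$ gives expressions for $\langle \nabla_u V(\tx,\hu), u - \hu\rangle$ and $\langle \nabla_u V(\ty,\hu), u - \hu\rangle$ in terms of $V(\tx,u) - V(\tx,\hu) - V(\hu,u)$ and the analogous quantity with $\ty$. Substituting these into the optimality condition, and using the subgradient inequality $\langle q'(\hu), u - \hu \rangle \le q(u) - q(\hu)$ to replace the linear term in $q$, one obtains after rearrangement exactly
\[
q(\hu) + \mu_1 V(\tx,\hu) + \mu_2 V(\ty,\hu) \le q(u) + \mu_1 V(\tx,u) + \mu_2 V(\ty,u) - (\mu_1 + \mu_2) V(\hu,u),
\]
as claimed.

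I do not expect a genuine obstacle here, since the result is standard (it is cited as Lemma~2 of \cite{GhaLan12-2a}); the only point requiring mild care is the direction of the subgradient inequality and the sign bookkeeping when moving terms across the optimality inequality — in particular making sure that the $-(\mu_1+\mu_2)V(\hu,u)$ term emerges with the correct sign from the two applications of the three-point identity. A secondary technical nicety is that $q$ need only be convex (not differentiable), so one must invoke the subgradient optimality condition rather than a gradient condition; this is harmless because $\mu_1 V(\tx,\cdot) + \mu_2 V(\ty,\cdot)$ is differentiable, so the sum rule for subdifferentials applies cleanly. Everything else is routine algebra with the definition \eqnok{s450}.
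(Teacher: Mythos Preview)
Your proposal is correct and is exactly the standard argument: first-order optimality at $\hu$, the three-point (generalized Pythagorean) identity for the Bregman distance, and the subgradient inequality for $q$ combine to give the stated bound with the $-(\mu_1+\mu_2)V(\hu,u)$ term. The paper itself does not supply a proof of this lemma; it simply cites Lemma~2 of \cite{GhaLan12-2a}, whose proof is precisely the one you outline, so there is nothing to compare.
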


The second technical result slightly generalizes Lemma 3 of \cite{Lan13-2}
to provide a convenient way to analyze sequences with sublinear rate of convergence.

\begin{lemma} \label{tech_result_sum} 
Let $w_k\in (0,1]$, $k = 1, 2, \ldots$, and $W_1 > 0$ be given and define
\beq \label{def_Gamma0}
W_k := (1 -\w_k) W_{k-1}, \ \ \ k \ge 2.
\eeq
Suppose that $W_k > 0$ for all $k \ge 2$ and that the sequence $\{\delta_k\}_{k \ge 0}$ satisfies
\beq \label{general_cond}
\delta_k \le (1 -w_k) \delta_{k-1} + B_k, \ \ \ k = 1, 2, \ldots.
\eeq
Then for any $k \ge 1$, we have
\beq  \label{general_tech_result}
\delta_k \le W_k\left[\frac{1-w_1}{W_1} \delta_{0} + \sum_{i=1}^k \frac{B_i}{W_i}\right].
\eeq
\end{lemma}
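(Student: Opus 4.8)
The statement is a routine ``unrolling'' of the linear recursion \eqnok{general_cond}, and the plan is to normalize by the weights $W_k$ and then telescope. First I would divide both sides of \eqnok{general_cond} by $W_k$, which is legitimate since $W_k>0$ by hypothesis, to get $\delta_k/W_k \le (1-w_k)\delta_{k-1}/W_k + B_k/W_k$. The key algebraic observation is that, by the definition \eqnok{def_Gamma0}, $(1-w_k)/W_k = (1-w_k)/[(1-w_k)W_{k-1}] = 1/W_{k-1}$ for every $k\ge 2$; here the assumption $w_k\le 1$ is used only to ensure that the coefficient $1-w_k$ is nonnegative, so that multiplying the inequality on $\delta_{k-1}$ by it preserves the direction, and $W_k\neq 0$ lets us cancel. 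Hence for $k\ge 2$ the recursion collapses to the additive inequality $\delta_k/W_k \le \delta_{k-1}/W_{k-1} + B_k/W_k$.

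Next I would telescope this additive inequality from index $2$ up to $k$, which yields $\delta_k/W_k \le \delta_1/W_1 + \sum_{i=2}^{k} B_i/W_i$. It then remains to bound the boundary term $\delta_1/W_1$: the case $k=1$ of \eqnok{general_cond} reads $\delta_1 \le (1-w_1)\delta_0 + B_1$, so dividing by $W_1>0$ gives $\delta_1/W_1 \le (1-w_1)\delta_0/W_1 + B_1/W_1$. Substituting this into the telescoped bound and merging the $B_1/W_1$ term into the sum produces $\delta_k/W_k \le (1-w_1)\delta_0/W_1 + \sum_{i=1}^{k} B_i/W_i$, and multiplying through by $W_k>0$ gives exactly \eqnok{general_tech_result}. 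As a sanity check, for $k=1$ the claimed bound reduces to $\delta_1 \le W_1[(1-w_1)\delta_0/W_1 + B_1/W_1] = (1-w_1)\delta_0 + B_1$, consistent with \eqnok{general_cond}. One could equally run a direct induction on $k$ using \eqnok{general_cond} at $k=1$ as the base case and the identity $(1-w_k)W_k^{-1}=W_{k-1}^{-1}$ in the inductive step, but the telescoping argument is cleaner.

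I do not anticipate a genuine obstacle here; the only point requiring care is the boundary at $k=1$. Because $W_0$ is never defined, one cannot write the coefficient of $\delta_0$ as $1/W_0$; instead it must appear as $(1-w_1)/W_1$, which forces the telescoping to start at $i=2$ and the $k=1$ instance of \eqnok{general_cond} to be handled separately. All other steps are elementary manipulations justified by the standing hypotheses $w_k\in(0,1]$ and $W_k>0$.
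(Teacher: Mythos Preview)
Your proposal is correct and follows exactly the approach sketched in the paper: divide \eqnok{general_cond} by $W_k$ and sum, using $(1-w_k)/W_k = 1/W_{k-1}$ for $k\ge 2$ to telescope. Your explicit treatment of the $k=1$ boundary (since $W_0$ is not defined) simply spells out what the paper's one-line proof leaves implicit.
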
 

\begin{proof}
The result follows from dividing both sides
of \eqnok{general_cond} by $W_k$ and then summing up the resulting inequalities.
\end{proof}

\vgap

We are now ready to establish the convergence of the
$\ProxS$ procedure.
\begin{proposition} \label{tech_inner}
If $\{p_t\}$ and $\{\theta_t\}$ in the $\ProxS$ procedure satisfy
\beq \label{cond_theta}
\theta_t = \frac{P_{t-1} - P_t}{(1-P_t) P_{t-1}}  \ \ \
\mbox{with} \ \ \ 
P_{t} = 
\begin{cases}
1,& t = 0,\\
p_t (1 + p_t)^{-1} P_{t-1}, & t \ge 1,
\end{cases}
\eeq
then, for any $t \ge 1$ and $u \in X$,  
\begin{align} 
\beta (1-P_{t})^{-1} V(u_t, u) &+   [\Phi(\tilde u_t) - \Phi(u) ]
\le \nn\\
&P_{t}(1-P_{t})^{-1} \left[
\beta V(u_{0}, u) +  \frac{M^2}{2\nu \beta} \sum_{i=1}^t( p_i^2 P_{i-1})^{-1}\right],  \label{inner_iter_rec}
\end{align}
where $\Phi$ is defined in \eqnok{generic_subproblem}.
\end{proposition}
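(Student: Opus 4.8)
The plan is to track, for each inner step $t \ge 1$, the quantity
\[
\delta_t := [\Phi(\tilde u_t) - \Phi(u)] + \beta (1-P_t)^{-1} V(u_t, u),
\]
which is precisely the left-hand side of \eqnok{inner_iter_rec}, and to show that it obeys a one-step recursion of the form $\delta_t \le (1-\theta_t)\delta_{t-1} + B_t$, after which Lemma~\ref{tech_result_sum} finishes the job.

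First I would apply Lemma~\ref{tech1_prox} to the projection \eqnok{def_ut_iter}, taking $q(\cdot) = g(\cdot) + l_h(u_{t-1},\cdot) + \cX(\cdot)$ (convex, since $g$ and $l_h(u_{t-1},\cdot)$ are affine and $\cX$ is convex), $\tx = x$, $\ty = u_{t-1}$, $\mu_1 = \beta$, $\mu_2 = \beta p_t$, and $\hu = u_t$, which yields for all $u \in X$
\[
q(u_t) + \beta V(x, u_t) + \beta p_t V(u_{t-1}, u_t) \le q(u) + \beta V(x, u) + \beta p_t V(u_{t-1}, u) - \beta(1+p_t) V(u_t, u).
\]
I would then trade $l_h$ for $h$: by convexity of $h$, $l_h(u_{t-1}, u) \le h(u)$, while \eqnok{nonsmoothh1} gives $l_h(u_{t-1}, u_t) \ge h(u_t) - M\|u_t - u_{t-1}\|$. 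Substituting, and recalling $\Phi(w) = g(w) + h(w) + \beta V(x, w) + \cX(w)$, turns the above into $\Phi(u_t)-\Phi(u) \le \beta p_t V(u_{t-1},u) - \beta(1+p_t)V(u_t,u) + M\|u_t-u_{t-1}\| - \beta p_t V(u_{t-1},u_t)$. Finally, the strong convexity \eqnok{s27} gives $V(u_{t-1},u_t) \ge \tfrac{\nu}{2}\|u_t-u_{t-1}\|^2$, and maximizing $Ms - \tfrac{\nu\beta p_t}{2}s^2$ over $s \ge 0$ bounds the last two terms by $M^2/(2\nu\beta p_t)$, so that
\[
\Phi(u_t) - \Phi(u) \le \beta p_t V(u_{t-1}, u) - \beta(1+p_t) V(u_t, u) + \frac{M^2}{2\nu\beta p_t}.
\]

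Next I would pass from $u_t$ to $\tilde u_t$ using convexity of $\Phi$ and the averaging rule $\tilde u_t = (1-\theta_t)\tilde u_{t-1} + \theta_t u_t$, multiply the previous display by $\theta_t$, and add $\beta(1-P_t)^{-1}V(u_t,u)$ to both sides. Invoking the two identities that follow from \eqnok{cond_theta}, namely $\theta_t(1+p_t) = (1-P_t)^{-1}$ (which cancels the $V(u_t,u)$ terms) and $\theta_t p_t(1-P_{t-1}) = 1-\theta_t$ (which converts $\theta_t\beta p_t V(u_{t-1},u)$ into $(1-\theta_t)\beta(1-P_{t-1})^{-1}V(u_{t-1},u)$), one gets exactly
\[
\delta_t \le (1-\theta_t)\,\delta_{t-1} + \frac{\theta_t M^2}{2\nu\beta p_t}, \qquad t \ge 2,
\]
whereas for $t=1$ one has $\theta_1 = 1$ and $u_0 = x$, so the $\delta_0$ term drops and $\delta_1 \le \beta p_1 V(u_0,u) + M^2/(2\nu\beta p_1)$. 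I would then apply Lemma~\ref{tech_result_sum} with $w_t = \theta_t$ and $W_t = P_t/(1-P_t)$, checking that $W_t = (1-\theta_t)W_{t-1}$, $W_1 = p_1 > 0$, and $\theta_t \in (0,1]$, $0 < P_t < 1$ for all $t \ge 1$; since $1-w_1 = 0$ the initial term vanishes, $B_1/W_1 = \beta V(u_0,u) + M^2/(2\nu\beta p_1^2 P_0)$ with $P_0 = 1$, and for $i \ge 2$ the relation $(1+p_i)P_i = p_i P_{i-1}$ gives $B_i/W_i = M^2/(2\nu\beta p_i^2 P_{i-1})$, so summing and multiplying by $W_t = P_t(1-P_t)^{-1}$ reproduces the right-hand side of \eqnok{inner_iter_rec}.

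The main obstacle is entirely the bookkeeping around \eqnok{cond_theta}: one must verify $\theta_t(1+p_t) = (1-P_t)^{-1}$, $\theta_t p_t(1-P_{t-1}) = 1-\theta_t$, $1-\theta_t = W_t/W_{t-1}$, the boundary value $\theta_1 = 1$, and the closed form for $B_i/W_i$, all lining up so that the telescoping in Lemma~\ref{tech_result_sum} outputs precisely the coefficients $\beta(1-P_t)^{-1}$, $P_t(1-P_t)^{-1}$, and $\sum_i(p_i^2P_{i-1})^{-1}$ in the statement. The prox-inequality, the linearization of $h$, and the absorption of the $M\|u_t-u_{t-1}\|$ term are routine.
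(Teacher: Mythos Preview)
Your proposal is correct, and the core ingredients (Lemma~\ref{tech1_prox} for the prox step, the linearization $l_h(u_{t-1},u)\le h(u)$ and $l_h(u_{t-1},u_t)\ge h(u_t)-M\|u_t-u_{t-1}\|$, the absorption of $M\|u_t-u_{t-1}\|-\beta p_t V(u_{t-1},u_t)$ into $M^2/(2\nu\beta p_t)$, and Lemma~\ref{tech_result_sum}) are exactly the ones the paper uses, leading to the same one-step inequality $\Phi(u_t)-\Phi(u)\le \beta p_t V(u_{t-1},u)-\beta(1+p_t)V(u_t,u)+M^2/(2\nu\beta p_t)$.

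The organization from that point on differs. The paper divides by $1+p_t$ and applies Lemma~\ref{tech_result_sum} with $w_t=1/(1+p_t)$ and $W_t=P_t$, obtaining a bound on $\tfrac{\beta}{P_t}V(u_t,u)+\sum_{i\le t}\tfrac{\Phi(u_i)-\Phi(u)}{P_i(1+p_i)}$; only afterwards does it handle $\tilde u_t$, by unrolling \eqnok{def_but} into the explicit convex combination $\tilde u_t=\tfrac{P_t}{1-P_t}\sum_{i\le t}\tfrac{u_i}{P_i(1+p_i)}$ and applying Jensen once. You instead fold convexity in step by step via $\Phi(\tilde u_t)\le(1-\theta_t)\Phi(\tilde u_{t-1})+\theta_t\Phi(u_t)$, bundle the result with $\beta(1-P_t)^{-1}V(u_t,u)$ into a single sequence $\delta_t$, and apply Lemma~\ref{tech_result_sum} with $w_t=\theta_t$ and $W_t=P_t/(1-P_t)$. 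Your route trades the inductive unrolling of $\tilde u_t$ for the two algebraic identities $\theta_t(1+p_t)=(1-P_t)^{-1}$ and $\theta_t p_t(1-P_{t-1})=1-\theta_t$ (both of which are correct and follow straight from \eqnok{cond_theta}); the paper's route makes the weights in the final convex combination more visible. Either way the bookkeeping lines up, and the outputs are identical.
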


\begin{proof}
By \eqnok{nonsmoothh1} and the definition of $l_h$ in \eqnok{def_lh},
we have
$
h(u_t) \le l_h(u_{t-1}, u_t) + M \|u_t - u_{t-1} \| $.
Adding $g(u_t) + \beta V(x, u_t) + \cX(u_t) $ to both sides of this inequality 
and using the definition of $\Phi$ in \eqnok{generic_subproblem}, we obtain
\beq \label{relation_Phi_phi}
\Phi(u_t) \le g(u_t) + l_h(u_{t-1}, u_t) + \beta V(x, u_t) + \cX(u_t) + M \|u_t - u_{t-1} \| .
\eeq
Now applying Lemma~\ref{tech1_prox} to \eqnok{def_ut_iter}, we obtain
\begin{align*}
&g(u_t) + l_h(u_{t-1}, u_t) + \beta V(x, u_t) + \cX(u_t) + \beta p_t V(u_{t-1}, u_t) \\ 
&\le g(u) + l_h(u_{t-1}, u) + \beta V(x, u) + \cX(u) + \beta p_t V(u_{t-1}, u) - \beta (1 + p_t) V(u_t, u)\\
&\le g(u) + h(u) + \beta V(x, u) + \cX(u) + \beta p_t V(u_{t-1}, u) - \beta (1 + p_t) V(u_t, u) \\
&= \Phi(u) + \beta p_t V(u_{t-1}, u) - \beta (1 + p_t) V(u_t, u),
\end{align*}
where the second inequality follows from the convexity of $h$.
Moreover, by the strong convexity of $\w$,
\begin{align*}
- \beta p_t V(u_{t-1}, u_t) + M \| u_t - u_{t-1}\| 
&\le - \frac{\nu \beta p_t }{2} \|u_t - u_{t-1}\|^2 + M \|u_t - u_{t-1}\|  \le \frac{M^2}{2\nu \beta p_t},
\end{align*}
where the last inequality follows from the simple fact that $- a t^2 / 2 + b t \le b^2/(2a)$ for any $a > 0$.
Combining the previous three inequalities, we conclude that
\begin{align*}
\Phi(u_t)  - \Phi(u) & \le  \beta p_t V(u_{t-1}, u) - \beta (1 + p_t) V(u_t, u) + \frac{M^2}{2\nu \beta p_t}.
\end{align*}
Dividing both sides by $1 + p_t$ and rearranging the terms, we obtain
\begin{align*}
\beta V(u_t, u) + \frac{\Phi(u_t)  - \Phi(u)} {1+p_t} &\le \frac{\beta p_t}{1+ p_t} V(u_{t-1}, u) 
+ \frac{M^2}{2\nu \beta (1+p_t) p_t },
\end{align*}
which, in view of the definition of $P_t$ in \eqnok{cond_theta} and 
Lemma~\ref{tech_result_sum} (with $k=t$, $w_k = 1/(1+p_t)$ and $W_k = P_t$), then implies that
\begin{align} 
\frac{\beta}{P_{t}} V(u_t, u) + \sum_{i=1}^t \frac{\Phi(u_i)  - \Phi(u)} {P_{i} (1+p_i)}
&\le \beta V(u_{0}, u) +  \frac{M^2}{2\nu \beta } \sum_{i=1}^t \frac{1}{P_{i} (1+p_i) p_i } \nn\\
&= \beta V(u_{0}, u) +  \frac{M^2}{2\nu \beta } \sum_{i=1}^t (p_i^2 P_{i-1})^{-1}, \label{inner_iter_basic}
\end{align}
where the last identity also follows from the definition of $P_t$ in \eqnok{cond_theta}.
Also note that by the definition of $\tilde u_t$ in the $\ProxS$ procedure and \eqnok{cond_theta}, we have
\[
\tilde u_t 
= \frac{P_{t}}{1-P_{t}} \left(\frac{1- P_{t-1}}{P_{t-1}} \tilde u_{t-1} + \frac{1}{P_{t}(1 + p_t)} u_t \right).
\]
Applying this relation inductively and using the fact that $P_0 = 1$, we can easily see that
\begin{align*}
\tilde u_t &= \frac{P_t}{1- P_t} \left[ \frac{1 - P_{t-2}}{P_{t-2}} \tilde u_{t-2} + \frac{1}{P_{t-1}(1+p_{t-1})} u_{t-1} + \frac{1}{P_t(1+p_t)} u_t\right] \\
& = \ldots 
= \frac{P_{t}}{1-P_{t}} \sum_{i=1}^t  \frac{1}{P_{i}(1 + p_i)} u_i,
\end{align*}
which, in view of the convexity of $\Phi$, then implies that
\beq \label{sum_theta}
\Phi(\tilde u_t) - \Phi(u) \le \frac{P_{t}}{1-P_{t}} \sum_{i=1}^t  \frac{\Phi(u_i) - \Phi(u)}{P_{i}(1 + p_i)} .
\eeq
Combining the above inequality with \eqnok{inner_iter_basic} and rearranging the terms, we obtain \eqnok{inner_iter_rec}.
\end{proof}

\vgap

Setting $u$ to be the optimal solution of \eqnok{generic_subproblem},
we can see that both $x_k$ and $\tilde x_k$ are approximate solutions of
\eqnok{generic_subproblem} if the right hand side (RHS) of \eqnok{inner_iter_rec} is small enough.
With the help of this result, we can establish an important recursion
from which the convergence of the GS algorithm easily follows.

\begin{proposition}
Suppose that $\{p_t\}$ and $\{\theta_t\}$ in the $\ProxS$ procedure
satisfy \eqnok{cond_theta}. Also assume that 
$\{\beta_k\}$ and $\{\gamma_k\}$ in the GS algorithm satisfy 
\beq \label{cond_gamma_beta}
\gamma_1 = 1 \ \ \ \mbox{and} \ \ \ \nu \beta_k - L \gamma_k \ge 0,  \ \ k \ge 1.
\eeq
Then for any $u \in X$ and $k \ge 1$,
\begin{align} 
\Psi(\bx_k) - \Psi(u)  
\le & (1- \gamma_k) [\Psi(\bx_{k-1})  - \Psi(u)] 
 + \gamma_k (1 - P_{T_k})^{-1}  \nn\\
 &\left[
\beta_k V(x_{k-1}, u) - \beta_k  V(x_k, u) + 
 \frac{M^2 P_{T_k}}{2 \nu \beta_k } \sum_{i=1}^{T_k}(p_i^2 P_{i-1})^{-1}\right].\label{GS_recursion}
\end{align}
\end{proposition}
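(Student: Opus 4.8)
The plan is to follow the standard estimate‑sequence analysis of accelerated proximal gradient methods, but with the exact minimizer of subproblem \eqnok{acc2} replaced by the inexact pair $(x_k,\tilde x_k)$ produced by $\ProxS$; the condition \eqnok{cond_gamma_beta} will enter precisely to absorb the extra error term created by this inexactness. Throughout, $\Phi_k(u):=g_k(u)+h(u)+\beta_k V(x_{k-1},u)+\cX(u)$ denotes the objective of the $k$-th subproblem, with $g_k=l_f(\ux_k,\cdot)$.

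The first step is to translate Proposition~\ref{tech_inner} into the language of the $k$-th outer iteration. The call \eqnok{def_ut} runs $\ProxS$ on $\Phi_k$ with prox-center $x_{k-1}$, parameter $\beta_k$ and $T_k$ inner steps, so that $u_0=x_{k-1}$, $u_{T_k}=x_k$ and $\tilde u_{T_k}=\tilde x_k$. Applying \eqnok{inner_iter_rec} with $t=T_k$, using the convexity bound $l_f(\ux_k,u)\le f(u)$ to get $\Phi_k(u)\le\Psi(u)+\beta_k V(x_{k-1},u)$, keeping the term $\beta_k(1-P_{T_k})^{-1}V(x_k,u)$ on the right‑hand side, and merging the two contributions in $\beta_k V(x_{k-1},u)$ via the identity $1+P_{T_k}(1-P_{T_k})^{-1}=(1-P_{T_k})^{-1}$, one obtains
\[
\Phi_k(\tilde x_k)-\Psi(u)\;\le\;(1-P_{T_k})^{-1}\Big[\beta_k V(x_{k-1},u)-\beta_k V(x_k,u)+\tfrac{M^2P_{T_k}}{2\nu\beta_k}\textstyle\sum_{i=1}^{T_k}(p_i^2P_{i-1})^{-1}\Big],
\]
which is exactly the bracketed quantity of \eqnok{GS_recursion}.

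The second step bounds $\Psi(\bx_k)$ from above. Since $\Psi(\bx_k)=f(\bx_k)+h(\bx_k)+\cX(\bx_k)$, the smoothness inequality \eqnok{smoothf1} gives $f(\bx_k)\le l_f(\ux_k,\bx_k)+\tfrac{L}{2}\|\bx_k-\ux_k\|^2$. From $\ux_k=(1-\gamma_k)\bx_{k-1}+\gamma_k x_{k-1}$ and $\bx_k=(1-\gamma_k)\bx_{k-1}+\gamma_k\tilde x_k$ we have $\bx_k-\ux_k=\gamma_k(\tilde x_k-x_{k-1})$, and strong convexity of $\w$ (cf. \eqnok{s27}, \eqnok{s450}) gives $\|\tilde x_k-x_{k-1}\|^2\le\tfrac{2}{\nu}V(x_{k-1},\tilde x_k)$, hence $\tfrac{L}{2}\|\bx_k-\ux_k\|^2\le\tfrac{L\gamma_k^2}{\nu}V(x_{k-1},\tilde x_k)$. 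On the other hand, convexity of $l_f(\ux_k,\cdot)$, $h$ and $\cX$ applied to the convex combination $\bx_k=(1-\gamma_k)\bx_{k-1}+\gamma_k\tilde x_k$, together with $l_f(\ux_k,\cdot)\le f(\cdot)$ and the definition of $\Phi_k$, yields
\[
l_f(\ux_k,\bx_k)+h(\bx_k)+\cX(\bx_k)\;\le\;(1-\gamma_k)\Psi(\bx_{k-1})+\gamma_k\big[\Phi_k(\tilde x_k)-\beta_k V(x_{k-1},\tilde x_k)\big].
\]
Combining the two displays,
\[
\Psi(\bx_k)\;\le\;(1-\gamma_k)\Psi(\bx_{k-1})+\gamma_k\Phi_k(\tilde x_k)-\tfrac{\gamma_k}{\nu}\big(\nu\beta_k-L\gamma_k\big)V(x_{k-1},\tilde x_k),
\]
and the last term is nonpositive by \eqnok{cond_gamma_beta} and $V\ge0$. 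Subtracting $\Psi(u)$, rewriting the right side as $(1-\gamma_k)[\Psi(\bx_{k-1})-\Psi(u)]+\gamma_k[\Phi_k(\tilde x_k)-\Psi(u)]$, and inserting the bound on $\Phi_k(\tilde x_k)-\Psi(u)$ from the first step gives \eqnok{GS_recursion}.

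The only genuinely delicate point is the first step: one must match the $\ProxS$ output to the quantities in Proposition~\ref{tech_inner} and simplify the $P_{T_k}$-dependent coefficients so that the prox-function terms telescope into the clean pair $\beta_k V(x_{k-1},u)-\beta_k V(x_k,u)$. The rest is the familiar accelerated-gradient bookkeeping, with the single twist that the inexact solve leaves the residual term $-\gamma_k\beta_k V(x_{k-1},\tilde x_k)$, which is exactly what the quadratic term $\tfrac{L\gamma_k^2}{\nu}V(x_{k-1},\tilde x_k)$ coming from smoothness can swallow under \eqnok{cond_gamma_beta}; recognizing that this is the purpose of that condition is the conceptual crux of the argument.
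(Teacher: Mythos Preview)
Your proof is correct and follows essentially the same approach as the paper: both arguments derive the key inequality $\Psi(\bx_k)\le(1-\gamma_k)\Psi(\bx_{k-1})+\gamma_k\Phi_k(\tilde x_k)$ from smoothness of $f$, the identity $\bx_k-\ux_k=\gamma_k(\tilde x_k-x_{k-1})$, strong convexity of $\w$, and condition \eqnok{cond_gamma_beta}, and then insert the bound from Proposition~\ref{tech_inner} together with $\Phi_k(u)\le\Psi(u)+\beta_k V(x_{k-1},u)$. The only cosmetic differences are the order of presentation (you treat the inner-loop estimate first, the paper treats it last) and your explicit use of the identity $1+P_{T_k}(1-P_{T_k})^{-1}=(1-P_{T_k})^{-1}$ to merge the two $V(x_{k-1},u)$ contributions in one step.
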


\begin{proof}
First, notice that by the definition of $\bx_k$ and $\ux_k$, we have $\bx_k - \ux_k = \gamma_k (\tilde x_k - x_{k-1})$. 
Using this observation, \eqnok{smoothf1}, the definition of $l_f$ in \eqnok{def_lf},
and the convexity of $f$, we obtain
\begin{align}
f(\bx_k) &\le l_f(\ux_k, \bx_k) + \frac{L}{2} \|\bx_k - \ux_k\|^2 \nn \\
&= (1-\gamma_k) l_f(\ux_k, \bx_{k-1}) + \gamma_k l_f(\ux_k, \tilde x_k) + \frac{L \gamma_k^2}{2} \|\tilde x_k - x_{k-1}\|^2\nn \\
&\le (1-\gamma_k) f(\bx_{k-1}) + \gamma_k \left[ l_f(\ux_k, \tilde x_k) +\beta_k V(x_{k-1}, \tilde x_k) \right] \nn\\
& \qquad- \gamma_k \beta_k V(x_{k-1}, \tilde x_k) + \frac{L \gamma_k^2}{2} \|\tilde x_k - x_{k-1}\|^2\nn \\
&\le (1-\gamma_k) f(\bx_{k-1}) + \gamma_k \left[ l_f(\ux_k, \tilde x_k) +\beta_k V(x_{k-1}, \tilde x_k) \right] \nn\\
& \qquad - \left( \gamma_k \beta_k - \frac{L \gamma_k^2}{\nu} \right) V(x_{k-1}, \tilde x_k) \nn \\
&\le (1-\gamma_k) f(\bx_{k-1}) + \gamma_k \left[ l_f(\ux_k, \tilde x_k) +\beta_k V(x_{k-1}, \tilde x_k) \right], \label{smoothness_f_enhance}
\end{align}
where the third inequality follows from the strong convexity of $\w$ and the last inequality follows from \eqnok{cond_gamma_beta}.
By the convexity of $h$ and $\cX$, we have 
\beq \label{nonsmoothness_h_enhance}
h(\bx_k) + \cX(\bx_k) \le (1-\gamma_k) [h(\bx_{k-1}) + \cX(\bx_{k-1})] + \gamma_k [h(\tilde x_k) + \cX(\tilde x_k)].
\eeq
 Adding up the
previous two inequalities, and using the definitions of $\Psi$ in \eqnok{cp} and $\Phi_k$ in \eqnok{acc2}, we have
\[
\Psi(\bx_k) 
\le (1-\gamma_k) \Psi(\bx_{k-1}) + \gamma_k \Phi_k(\tilde x_k).
\]
Subtracting $\Psi(u)$ from both sides of the above inequality,
we obtain
\begin{align}
\Psi(\bx_k) - \Psi(u) &\le (1- \gamma_k) [\Psi(\bx_{k-1})  - \Psi(u)] + \gamma_k [\Phi_k(\tilde x_k) - \Psi(u)]. \label{bnd_outer}
\end{align}
Also note that by the definition of $\Phi_k$ in \eqnok{acc2} and the convexity of $f$,
\beq \label{lb_big_phi}
\Phi_k(u) \le f(u) + h(u) + \cX(u) + \beta_k V(x_{k-1},u) = \Psi(u) + \beta_k V(x_{k-1}, u), \ \ \forall u \in X.
\eeq
Combining these two inequalities, we obtain
\begin{align}
\Psi(\bx_k) - \Psi(u) &\le (1- \gamma_k) [\Psi(\bx_{k-1})  - \Psi(u)] \nn\\
&\qquad+ \gamma_k [\Phi_k(\tilde x_k) - \Phi_k(u) +  \beta_k V(x_{k-1}, u)]. \label{bnd_outer1}
\end{align}
Now, in view of  \eqnok{inner_iter_rec}, the definition of $\Phi_k$ in \eqnok{acc2},
and the origin of $(x_k, \tilde x_k)$ in \eqnok{def_ut}, we can easily see that,
for any $u \in X$ and $k \ge 1$,
\begin{align*}
\frac{\beta_k}{1-P_{T_k}} V(x_k, u) + & [\Phi_k(\tilde x_k) - \Phi_k(u) ] \le\\
& \qquad \frac{P_{T_k}}{1-P_{T_k}}\left[\beta_k  V(x_{k-1}, u)  
\frac{M^2  }{2\nu \beta_k} \sum_{i=1}^t (p_i^2 P_{i-1})^{-1}\right].
\end{align*}
Plugging the above bound on $\Phi_k(\tilde x_k) - \Phi_k(u) $ into \eqnok{bnd_outer1},
we obtain \eqnok{GS_recursion}.
\end{proof}

\vgap

We are now ready to establish the main convergence properties of the GS algorithm.
Note that the following quantity will be used in our analysis of this algorithm.
\beq \label{def_Gamma}
\Gamma_k = 
\begin{cases}
1, & k =1,\\
(1-\gamma_k) \Gamma_{k-1}, & k \ge 2.
\end{cases}
\eeq

\begin{theorem} \label{main_theorem_d}
Assume that $\{p_t\}$ and $\{\theta_t\}$ in the $\ProxS$ procedure
satisfy \eqnok{cond_theta}, and also that  
$\{\beta_k\}$ and $\{\gamma_k\}$ in the GS algorithm satisfy \eqnok{cond_gamma_beta}. 
\begin{itemize}
\item [a)] If for any $k \ge 2$,
\beq \label{cond_gamma_beta_alpha}
\frac{\gamma_k \beta_k}{\Gamma_k(1 - P_{T_k})} 
\le \frac{\gamma_{k-1} \beta_{k-1}}{\Gamma_{k-1} (1 - P_{T_{k-1}})}, \
\eeq
then we have, for any $N \ge 1$, 
\begin{align} 
\Psi(\bx_N) - \Psi(x^*) \le {\cal B}_d (N) &:= 
 \frac{\Gamma_N \beta_1 }{1 - P_{T_1}}V(x_0, x^*) \nn\\
 & \quad + \frac{M^2 \Gamma_N }{2 \nu} \sum_{k=1}^N 
\sum_{i=1}^{T_k} \frac{\gamma_k P_{T_k}}{\Gamma_k \beta_k (1 - P_{T_k}) p_i^2 P_{i-1}},
\label{main_result_smooth}
\end{align}
where $x^* \in X$ is an arbitrary optimal solution of problem \eqnok{cp},
and $P_t$ and $\Gamma_k$ are defined in \eqnok{def_but} and \eqnok{def_Gamma}, respectively.
\item [b)] If $X$ is compact, and for any $k\ge 2$,
\beq \label{cond_gamma_beta_alpha_bnd}
\frac{\gamma_k \beta_k}{\Gamma_k(1 - P_{T_k})} 
\ge \frac{\gamma_{k-1} \beta_{k-1}}{\Gamma_{k-1} (1 - P_{T_{k-1}})},
\eeq
then \eqnok{main_result_smooth} still holds by simply replacing the first term 
in the definition of ${\cal B}_d(N)$ with 
$
\gamma_{N} \beta_{N} \bar V(x^*)/(1 - P_{T_{N}}),
$
where
$
\bar V(u)  = \max_{x \in X} V(x, u).
$
\end{itemize}
\end{theorem}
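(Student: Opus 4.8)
The plan is to turn the one-step recursion \eqnok{GS_recursion} into a bound on $\Psi(\bx_N)-\Psi(x^*)$ by dividing through by $\Gamma_k$ and summing, and then to estimate the residual prox-function sum separately under the two monotonicity hypotheses. First I would apply Lemma~\ref{tech_result_sum} to \eqnok{GS_recursion} with $\delta_k=\Psi(\bx_k)-\Psi(u)$, $w_k=\gamma_k$, $W_k=\Gamma_k$ (so that $W_1=\Gamma_1=1$, consistent with $\gamma_1=1$; this needs only the standing assumption that $\Gamma_k>0$ for $k\ge2$), and
\[
B_k=\frac{\gamma_k}{1-P_{T_k}}\left[\beta_k V(x_{k-1},u)-\beta_k V(x_k,u)+\frac{M^2 P_{T_k}}{2\nu\beta_k}\sum_{i=1}^{T_k}(p_i^2 P_{i-1})^{-1}\right].
\]
Since $\gamma_1=1$, the term involving $\delta_0$ in \eqnok{general_tech_result} drops out, leaving
\[
\Psi(\bx_N)-\Psi(u)\le\Gamma_N\sum_{k=1}^N\frac{\gamma_k\beta_k}{\Gamma_k(1-P_{T_k})}\bigl[V(x_{k-1},u)-V(x_k,u)\bigr]+\frac{M^2\Gamma_N}{2\nu}\sum_{k=1}^N\sum_{i=1}^{T_k}\frac{\gamma_k P_{T_k}}{\Gamma_k\beta_k(1-P_{T_k})p_i^2 P_{i-1}}.
\]
The double sum is already the second term of ${\cal B}_d(N)$, so everything reduces to bounding the first sum.

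Next I would set $\lambda_k:=\gamma_k\beta_k/[\Gamma_k(1-P_{T_k})]$ and apply Abel summation,
\[
\sum_{k=1}^N\lambda_k\bigl[V(x_{k-1},u)-V(x_k,u)\bigr]=\lambda_1 V(x_0,u)-\lambda_N V(x_N,u)+\sum_{k=1}^{N-1}(\lambda_{k+1}-\lambda_k)V(x_k,u).
\]
In case a), \eqnok{cond_gamma_beta_alpha} is exactly $\lambda_{k+1}\le\lambda_k$; since $V\ge0$ and $\lambda_N\ge0$, both the boundary term $-\lambda_N V(x_N,u)$ and the final sum are nonpositive, so the left side is at most $\lambda_1 V(x_0,u)=\beta_1 V(x_0,u)/(1-P_{T_1})$, using $\gamma_1=\Gamma_1=1$; multiplying by $\Gamma_N$ and taking $u=x^*$ yields \eqnok{main_result_smooth}. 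In case b), \eqnok{cond_gamma_beta_alpha_bnd} gives the reverse inequality $\lambda_{k+1}\ge\lambda_k$, so I would bound $V(x_k,u)\le\bar V(u)$ and $V(x_0,u)\le\bar V(u)$ (finite because $X$ is compact), discard $-\lambda_N V(x_N,u)\le0$, and collapse the telescoping sum to $\lambda_1\bar V(u)+\bar V(u)\sum_{k=1}^{N-1}(\lambda_{k+1}-\lambda_k)=\lambda_N\bar V(u)=\gamma_N\beta_N\bar V(x^*)/[\Gamma_N(1-P_{T_N})]$; multiplying by $\Gamma_N$ replaces the first term of ${\cal B}_d(N)$ by $\gamma_N\beta_N\bar V(x^*)/(1-P_{T_N})$, as claimed.

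I do not expect a genuinely hard step: the substantive work is already contained in the proposition yielding \eqnok{GS_recursion}, and what remains is the telescoping bookkeeping above. The only point that needs care is keeping the signs straight in the Abel-summation step --- exploiting $\lambda_{k+1}-\lambda_k\le0$ together with $V\ge0$ in case a), versus $\lambda_{k+1}-\lambda_k\ge0$ together with $V\le\bar V$ in case b) --- and observing that the boundary term $-\lambda_N V(x_N,u)$ may always be discarded since $\lambda_N,V\ge0$.
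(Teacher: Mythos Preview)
Your proposal is correct and follows essentially the same route as the paper: apply Lemma~\ref{tech_result_sum} to the recursion \eqnok{GS_recursion} to obtain \eqnok{main_iter_rec}, then handle the telescoping prox-term sum via the monotonicity of $\lambda_k=\gamma_k\beta_k/[\Gamma_k(1-P_{T_k})]$ together with $V\ge 0$ in part a) and $V(x_k,u)\le\bar V(u)$ in part b). The only cosmetic difference is that you phrase the telescoping step as an explicit Abel summation, whereas the paper writes out the same rearrangement directly.
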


\begin{proof}
We conclude from \eqnok{GS_recursion} and  Lemma~\ref{tech_result_sum} that
\begin{align}
\Psi(\bx_N) - \Psi(u)  
&\le \Gamma_N \frac{1-\gamma_1} {\Gamma_1} [\Psi(\bx_0) - \Psi(u)] \nn \\
& \quad + \Gamma_N 
\sum_{k=1}^N \frac{ \beta_k  \gamma_k}{\Gamma_k (1 - P_{T_k})}
\left[
V(x_{k-1}, u) -  V(x_k, u) \right] \nn \\
&\quad + \frac{M^2 \Gamma_N}{2 \nu} \sum_{k=1}^N \sum_{i=1}^{T_k} 
\frac{ \gamma_k P_{T_k}}{\Gamma_k \beta_k (1 - P_{T_k}) p_i^2 P_{i-1}} \nn \\
& = \Gamma_N 
\sum_{k=1}^N \frac{ \beta_k  \gamma_k}{\Gamma_k (1 - P_{T_k})}
\left[
V(x_{k-1}, u) -  V(x_k, u) \right] \nn\\
& \quad+ \frac{M^2 \Gamma_N}{2 \nu} \sum_{k=1}^N \sum_{i=1}^{T_k} 
\frac{ \gamma_k P_{T_k}}{\Gamma_k \beta_k (1 - P_{T_k}) p_i^2 P_{i-1}}, \label{main_iter_rec}
\end{align}
where the last identity follows from the fact that $\gamma_1 = 1$.
Now it follows from \eqnok{cond_gamma_beta_alpha} that
\begin{align}
\sum_{k=1}^N &\frac{\beta_k\gamma_k}{\Gamma_k (1 - P_{T_k})}
\left[
V(x_{k-1}, u) -  V(x_k, u) \right] \nn \\
&\le \frac{\beta_1 \gamma_1  }{\Gamma_1 (1 - P_{T_1})} V(x_0, u) - \frac{\beta_N  \gamma_N }{\Gamma_N (1 - P_{T_N})} V(x_N, u) 
\le \frac{\beta_1}{1 - P_{T_1}} V(x_0, u), \label{bnd_dist1}
 \end{align}
where the last inequality follows from the facts that $\gamma_1 = \Gamma_1 = 1$, $P_{T_N} \le 1$, and $V(x_N, u) \ge0$.
The result in part a) then clearly follows from the previous two inequalities with $u = x^*$.
Moreover, using \eqnok{cond_gamma_beta_alpha_bnd} and the
fact $V(x_k, u) \le \bar V(u) $ , we conclude that
\begin{align}
\sum_{k=2}^N &\frac{\beta_k \gamma_k}{\Gamma_k(1 - P_{T_k})}
\left[
 V(x_{k-1}, u) -  V(x_k, u) \right] \nn \\
&\le \frac{\beta_1}{1-P_{T_1}} \bar V(u) - \sum_{k=2}^N \left[\frac{\beta_{k-1} \gamma_{k-1} }{\Gamma_{k-1} (1 - P_{T_{k-1}})}
- \frac{\beta_{k} \gamma_{k} }{\Gamma_{k} (1 - P_{T_{k}})}
\right] \bar V(u) \nn\\
&= \frac{\gamma_{N} \beta_{N}}{\Gamma_{N} (1 - P_{T_{N}})} \bar V(u). \label{bnd_dist2}
\end{align}
Part b) then follows from the above observation and \eqnok{main_iter_rec} with $u = x^*$.
\end{proof}

\vgap

Clearly, there are various options for specifying the parameters 
$\{p_t\}$, $\{\theta_t\}$, $\{\beta_k\}$, $\{\gamma_k\}$, and $\{T_k\}$
to guarantee the convergence of 
the GS algorithm.
Below we provide a few such selections which lead to
the best possible rate of convergence for solving problem \eqnok{cp}.
In particular, Corollary~\ref{cor_deter_1}.a) provides
a set of such parameters for the case when the feasible region $X$ is 
unbounded and the iteration limit $N$ is given a priori, while the one in Corollary~\ref{cor_deter_1}.b)
works only for the case when $X$ is compact, but does not require
$N$ to be given in advance.

\begin{corollary} \label{cor_deter_1}
Assume that $\{p_t\}$ and $\{\theta_t\}$ in the $\ProxS$ procedure are set to
\beq \label{spec_theta}
p_t = \frac{t}{2} \ \ \ \mbox{and} \ \ \ \theta_t = \frac{2(t+1)}{t(t+3)}, \ \forall \, t \ge 1. 
\eeq
\begin{itemize}
 \item [a)] If $N$ is fixed a priori, and $\{\beta_k\}$, $\{\gamma_k\}$, and 
$\{T_k\}$ are set to
\beq \label{spec_alpha_theta}
\beta_k = \frac{2L}{v k},  \ \
\gamma_k = \frac{2}{k+1}, \ \ \mbox{and} \ \
T_k = \left \lceil \frac{M^2 N  k^2 }{\tilde D L^2} \right \rceil
\eeq
for some $\tilde D > 0$,  
then
\beq \label{main_coro1}
\Psi(\bx_N) - \Psi(x^*) \le \frac{2 L}{N (N+1) } \left[ \frac{3 V(x_0, x^*)}{\nu} + 2 \tilde D \right], \ \ \forall N \ge 1.
\eeq
\item [b)] If $X$ is compact, and
$\{\beta_k\}$, $\{\gamma_k\}$, and $\{T_k\}$ are set to
\beq \label{spec_alpha_theta_bnd}
\beta_k =  \frac{9 L (1- P_{T_k}) }{2 \nu (k+1)}, \ \  
\gamma_k = \frac{3}{k+2}, \ \ \mbox{and} \ \
T_k = \left \lceil \frac{M^2 (k+1)^3 }{\tilde D L^2} \right \rceil, \ \
\eeq
for some $\tilde D > 0$,
then 
\beq \label{main_coro2}
\Psi(\bx_N) - \Psi(x^*) \le   
\frac{L}{(N+1) (N+2)} \left(\frac{27 \bar V(x^*)}{2 \nu} + \frac{8 \tilde D}{3} \right), \ \ \forall N \ge 1.
\eeq
\end{itemize}
\end{corollary}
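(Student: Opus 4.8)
The proof is a verification exercise: plug the proposed parameter choices into the master recursion \eqnok{main_result_smooth} (resp.\ its compact-domain variant from Theorem~\ref{main_theorem_d}.b)) and simplify. So the first thing I would do is check that the stated parameters are admissible, i.e.\ that they satisfy the hypotheses of Theorem~\ref{main_theorem_d}. For part a) this means verifying \eqnok{cond_gamma_beta} ($\gamma_1 = 1$ and $\nu\beta_k - L\gamma_k \ge 0$; with $\beta_k = 2L/(\nu k)$ and $\gamma_k = 2/(k+1)$ one gets $\nu\beta_k = 2L/k \ge 2L/(k+1) = L\gamma_k$, so this holds with equality-ish slack) and the monotonicity condition \eqnok{cond_gamma_beta_alpha}. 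To handle the latter I first need closed forms: from $p_t = t/2$ and the recursion $P_t = p_t(1+p_t)^{-1}P_{t-1}$ in \eqnok{cond_theta}, a telescoping product gives $P_t = \prod_{i=1}^t \frac{i/2}{1+i/2} = \prod_{i=1}^t \frac{i}{i+2} = \frac{2}{(t+1)(t+2)}$, hence $1 - P_{T_k} = \frac{(T_k+1)(T_k+2)-2}{(T_k+1)(T_k+2)} = \frac{T_k(T_k+3)}{(T_k+1)(T_k+2)}$, and one checks $\theta_t = \frac{P_{t-1}-P_t}{(1-P_t)P_{t-1}}$ reduces to $\frac{2(t+1)}{t(t+3)}$ as claimed in \eqnok{spec_theta}. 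Also $\Gamma_k = \prod_{j=2}^k(1-\gamma_j) = \prod_{j=2}^k \frac{j-1}{j+1} = \frac{2}{k(k+1)}$ from \eqnok{def_Gamma}.

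Next I would evaluate the inner sum $\sum_{i=1}^{T_k}(p_i^2 P_{i-1})^{-1}$ appearing in \eqnok{main_result_smooth}. With $p_i = i/2$ and $P_{i-1} = \frac{2}{i(i+1)}$ we get $(p_i^2 P_{i-1})^{-1} = \frac{4}{i^2}\cdot\frac{i(i+1)}{2} = \frac{2(i+1)}{i}$, so $\sum_{i=1}^{T_k}(p_i^2 P_{i-1})^{-1} = \sum_{i=1}^{T_k} 2(1 + 1/i) = 2T_k + 2\sum_{i=1}^{T_k}\tfrac1i$; a crude bound like $\le 4T_k$ (for $T_k \ge 1$, using the harmonic sum $\le T_k$) is what I expect gets used. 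Multiplying by $P_{T_k} = \frac{2}{(T_k+1)(T_k+2)}$ gives $P_{T_k}\sum_{i=1}^{T_k}(p_i^2P_{i-1})^{-1} \le \frac{8 T_k}{(T_k+1)(T_k+2)} \le \frac{8}{T_k}$. Then the second term of ${\cal B}_d(N)$ becomes $\frac{M^2\Gamma_N}{2\nu}\sum_{k=1}^N \frac{\gamma_k}{\Gamma_k\beta_k}\cdot\frac{P_{T_k}\sum_i(p_i^2P_{i-1})^{-1}}{1-P_{T_k}}$; here I substitute $\gamma_k/(\Gamma_k\beta_k) = \frac{2/(k+1)}{[2/(k(k+1))]\cdot[2L/(\nu k)]} = \frac{\nu k^2}{2L}$ and use the floor lower bound $T_k \ge \frac{M^2 N k^2}{\tilde D L^2}$ from \eqnok{spec_alpha_theta} together with $1 - P_{T_k}\ge 1/2$ (true once $T_k$ is moderately large, which holds for the relevant range) to get each summand $\lesssim \frac{M^2}{2\nu}\cdot\frac{\nu k^2}{2L}\cdot\frac{8}{T_k}\cdot 2 \le \frac{4\tilde D L}{N}$, so the whole sum is $\le 4\tilde D L$; finally $\frac{M^2\Gamma_N}{2\nu}\cdot(\text{sum})$ combined with $\Gamma_N = \frac{2}{N(N+1)}$ yields the $\frac{4L\tilde D}{N(N+1)}$-type term, matching the $2\tilde D$ coefficient in \eqnok{main_coro1} after the arithmetic is done carefully. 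The first term of ${\cal B}_d(N)$ is $\frac{\Gamma_N\beta_1}{1-P_{T_1}}V(x_0,x^*) = \frac{2}{N(N+1)}\cdot\frac{2L/\nu}{1-P_{T_1}}V(x_0,x^*)$, which under $1-P_{T_1}\ge 2/3$ (i.e.\ $P_{T_1}\le 1/3$, requiring $T_1\ge 2$, which one checks from the formula for $T_1$) gives $\le \frac{6LV(x_0,x^*)}{\nu N(N+1)}$, the $3V(x_0,x^*)/\nu$ term.

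For part b) the structure is identical but with the compact-domain replacement: the first term of ${\cal B}_d(N)$ becomes $\frac{\gamma_N\beta_N}{1-P_{T_N}}\bar V(x^*)$, and the chosen $\beta_k = \frac{9L(1-P_{T_k})}{2\nu(k+1)}$ is designed precisely so that the factor $1-P_{T_k}$ cancels, turning condition \eqnok{cond_gamma_beta_alpha_bnd} into a clean monotonicity of $\gamma_k\beta_k/((1-P_{T_k})\Gamma_k) \propto \frac{(k+2)k(k+1)}{(k+1)} \cdot(\dots)$ — I'd verify this is increasing in $k$ — and making $\frac{\gamma_N\beta_N}{1-P_{T_N}} = \frac{3}{N+2}\cdot\frac{9L}{2\nu(N+1)} = \frac{27L}{2\nu(N+1)(N+2)}$, giving the $\frac{27\bar V(x^*)}{2\nu}$ term directly. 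With $\gamma_k = 3/(k+2)$ one gets $\Gamma_k = \prod_{j=2}^k\frac{j-1}{j+2} = \frac{6}{k(k+1)(k+2)}$, and the second-term computation proceeds as in a) but now $T_k \ge \frac{M^2(k+1)^3}{\tilde D L^2}$; the extra power of $k$ is exactly what's needed because $\beta_k$ here carries the extra $(1-P_{T_k})$ and $\Gamma_k$ has a cubic denominator. I also need to recheck \eqnok{cond_gamma_beta}: $\nu\beta_k - L\gamma_k = \frac{9L(1-P_{T_k})}{2(k+1)} - \frac{3L}{k+2}$, nonnegative iff $1-P_{T_k} \ge \frac{2(k+1)}{3(k+2)}$, which needs $T_k$ large enough — this is a genuine constraint to confirm, and it's why $T_k$ is chosen to grow like $(k+1)^3$ rather than something slower.

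**Main obstacle.** None of the individual steps is deep — the work is bookkeeping — but the real hazard is keeping the constants straight: the floors in \eqnok{spec_alpha_theta}/\eqnok{spec_alpha_theta_bnd} mean $T_k$ is only bounded \emph{below} by the displayed expression, so every estimate must be monotone in the right direction ($P_{T_k}$ decreasing, $1/T_k$ decreasing, $1-P_{T_k}$ increasing in $T_k$), and I must separately check that $T_k$ is large enough for the auxiliary bounds ($1-P_{T_k}\ge 1/2$, the harmonic-sum bound, and in b) the strengthened $\nu\beta_k\ge L\gamma_k$) to kick in across all $k\in\{1,\dots,N\}$ — the $k=1$ case being the tightest. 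The second likely friction point is verifying the telescoping monotonicity conditions \eqnok{cond_gamma_beta_alpha} and \eqnok{cond_gamma_beta_alpha_bnd} with the $(1-P_{T_k})$ factors present, since $T_k$ jumps by a non-smooth amount; I expect this is handled by observing that $\gamma_k\beta_k/(1-P_{T_k})$ has the $(1-P_{T_k})$ cancel (exactly in b), and in a) because $\beta_k$ has no such factor so one just needs $\gamma_k\beta_k/\Gamma_k$ times $1/(1-P_{T_k})$ monotone, using that $T_k$ is nondecreasing in $k$ so $1-P_{T_k}$ is too).
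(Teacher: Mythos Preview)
Your approach is essentially the same as the paper's: compute $P_t=\tfrac{2}{(t+1)(t+2)}$, $\Gamma_k=\tfrac{2}{k(k+1)}$ (resp.\ $\tfrac{6}{k(k+1)(k+2)}$), verify \eqnok{cond_theta}, \eqnok{cond_gamma_beta}, and the monotonicity condition, bound the inner sum by $4T_k$, and substitute into \eqnok{main_result_smooth}.

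Two small corrections where your rough estimates would miss the stated constants or overstate the hypotheses. First, to hit the coefficient $2\tilde D$ in \eqnok{main_coro1} you must not bound $1/(1-P_{T_k})\le 2$ separately; the paper instead uses the exact identity
\[
\frac{P_{T_k}}{1-P_{T_k}}=\frac{2}{T_k(T_k+3)},
\]
so that $\tfrac{4 P_{T_k} T_k}{1-P_{T_k}}=\tfrac{8}{T_k+3}$ rather than your $\tfrac{16}{T_k}$; this is exactly the factor of two you need. Second, the auxiliary inequalities are less delicate than you suggest: since $P_1=\tfrac13$, one has $P_{T_k}\le\tfrac13$ (hence $1-P_{T_k}\ge\tfrac23$) for every $T_k\ge1$, so no lower bound like $T_1\ge2$ is required, and in part b) the condition $\nu\beta_k\ge L\gamma_k$ follows already from $1-P_{T_k}\ge\tfrac23\ge\tfrac{2(k+1)}{3(k+2)}$ --- the cubic growth of $T_k$ is needed only for the second-term estimate, not for admissibility.
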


\begin{proof}
We first show part a).
By the definitions of $P_t$  and $p_t$ in \eqnok{cond_theta} and \eqnok{spec_theta}, we have
\beq \label{compute_Pt}
P_t =  \frac{t P_{t-1}}{t+2} = \ldots = \frac{2}{(t+1) (t+2)}.
\eeq
Using the above identity and \eqnok{spec_theta}, we can easily see that
the condition in \eqnok{cond_theta} holds. It also follows from \eqnok{compute_Pt}
and the definition of $T_k$ in \eqnok{spec_alpha_theta} that
\beq \label{rel_Q}
P_{T_k} \le P_{T_{k-1}} \le \ldots \le P_{T_1} \le \frac{1}{3}.
\eeq 
Now, it can be easily seen from the definition of $\beta_k$ and $\gamma_k$ in \eqnok{spec_alpha_theta}
that \eqnok{cond_gamma_beta} holds.
It also follows from \eqnok{def_Gamma} and \eqnok{spec_alpha_theta}  that
\beq \label{compute_Gamma}
\Gamma_k = \frac{2}{k (k+1)}.
\eeq
By \eqnok{spec_alpha_theta}, \eqnok{rel_Q}, and \eqnok{compute_Gamma}, we have
\[
\frac{\gamma_k \beta_k}{\Gamma_k (1 - P_{T_k})} = \frac{2L}{\nu (1- P_{T_k})}
\le  \frac{2L}{\nu (1- P_{T_{k-1}})} = \frac{\gamma_{k-1} \beta_{k-1}}{\Gamma_{k-1} (1 - P_{T_k-1})},
\]
from which \eqnok{cond_gamma_beta_alpha} follows.
Now, by \eqnok{compute_Pt} and the fact that $p_t = t/2$, we have
\beq \label{rel_Q1}
\sum_{i=1}^{T_k} \frac{1}{p_i^2 P_{i-1}} = 2 \sum_{i=1}^{T_k} \frac{i+1}{i} \le 4 T_k,
\eeq
which, together with \eqnok{spec_alpha_theta} and \eqnok{compute_Gamma}, then imply that
\beq \label{bnd_2nd}
\sum_{i=1}^{T_k} \frac{\gamma_k P_{T_k}}{\Gamma_k \beta_k (1 - P_{T_k}) p_i^2 P_{i-1}}  
\le \frac{4 \gamma_k P_{T_k} T_k}{\Gamma_k \beta_k (1-P_{T_k})}
= \frac{4 \nu k^2}{L(T_k+3)}. 
\eeq
Using this observation, \eqnok{main_result_smooth}, \eqnok{rel_Q}, and \eqnok{compute_Gamma}, we have
\begin{align*}
{\cal B}_d(N) &\le \frac{4 L V(x_0, x^*)}{\nu N (N+1) (1 - P_{T_1})} + \frac{4 M^2}{L N (N+1)} \sum_{k=1}^N \frac{k^2}{T_k+3}\\
&\le \frac{6 L V(x_0, x^*)}{\nu N (N+1) } + \frac{4 M^2}{L N (N+1)} \sum_{k=1}^N \frac{k^2}{T_k+3},
\end{align*}
which, in view of Theorem~\ref{main_theorem_d}.a) and
the definition of $T_k$ in \eqnok{spec_alpha_theta}, then clearly implies
\eqnok{main_coro1}.

Now let us show that part b) holds. It follows from
\eqnok{rel_Q}, and the definition of $\beta_k$ and $\gamma_k$ in \eqnok{spec_alpha_theta_bnd} that
\beq \label{rel_beta}
\beta_k \ge \frac{3 L}{\nu (k+1)} \ge \frac{L \gamma_k}{\nu}
\eeq
and hence that \eqnok{cond_gamma_beta} holds. 
It also follows from \eqnok{def_Gamma} and \eqnok{spec_alpha_theta_bnd} that
\beq \label{compute_Gamma1}
\Gamma_k = \frac{6}{k (k+1) (k+2)}, \ \  k \ge 1,
\eeq
and hence that
\beq \label{rel_beta1}
\frac{\gamma_k \beta_k}{\Gamma_k (1 - P_{T_k})} = \frac{k (k+1)}{2} \frac{9 L}{2\nu (k+1)}
= \frac{9 L k}{ 4 \nu},
\eeq
which implies that \eqnok{cond_gamma_beta_alpha_bnd} holds.
Using \eqnok{spec_alpha_theta_bnd}, \eqnok{rel_Q}, \eqnok{rel_Q1},  and \eqnok{rel_beta}, we have
\begin{align}
\sum_{i=1}^{T_k} \frac{\gamma_k P_{T_k}}{\Gamma_k \beta_k (1 - P_{T_k})p_i^2 P_{i-1}}
& \le  \frac{4 \gamma_k P_{T_k} T_k}{\Gamma_k \beta_k (1-P_{T_k})}
= \frac{4 \nu k (k+1)^2 P_{T_k} T_k}{9 L  (1-P_{T_k})^2} \nn\\
&= \frac{8 \nu k (k+1)^2 (T_k +1) (T_k +2)}{9 L T_k (T_k+3)^2} \le \frac{8\nu k(k+1)^2}{9L T_k}. \label{bnd_2nd_b}
\end{align}
Using this observation, \eqnok{spec_alpha_theta_bnd}, \eqnok{compute_Gamma1}, and Theorem~\ref{main_theorem_d}.b), we conclude that
\begin{align*}
\Psi(\bx_N) - \Psi(x^*) 
&\le \frac{\gamma_N \beta_N \bar V(x^*)}{(1 - P_{T_N})} + \frac{M^2 \Gamma_N} {2\nu} \sum_{k=1}^N \frac{8\nu k(k+1)^2}{9L T_k} \\
&\le \frac{\gamma_N \beta_N \bar V(x^*)}{(1 - P_{T_N})} + \frac{8 L \tilde D}{3  (N+1) (N+2)}\\
&\le  \frac{L}{(N+1) (N+2)} \left(\frac{27 \bar V(x^*)}{2 \nu} + \frac{8 \tilde D}{3} \right).
\end{align*}
\end{proof}

Observe that by \eqnok{def_but} and \eqnok{compute_Pt}, when the selection of $p_t = t/2$, the definition of $\tilde u_t$
in the $\ProxS$ procedure can be simplified as
\[
\tilde u_t = \frac{(t+2)(t-1)}{t(t+3)} \tilde u_{t-1} +  \frac{2(t+1)}{t(t+3)} u_t.
\]

In view of Corollary~\ref{cor_deter_1}, we can establish the complexity of
the GS algorithm for finding an $\epsilon$-solution of problem~\eqnok{cp}.

\begin{corollary} \label{main_bnd_d}
Suppose that $\{p_t\}$ and $\{\theta_t\}$ are set to \eqnok{spec_theta}.
Also assume that there exists an estimate ${\cal D}_X > 0$ s.t. 
\beq \label{def_DX}
V(x, y) \le {\cal D}_X, \ \ \forall x, y \in X.
\eeq
If $\{\beta_k\}$, $\{\gamma_k\}$, and 
$\{T_k\}$ are set to  \eqnok{spec_alpha_theta} with
$
\tilde D = 3 {\cal D}_X/(2 \nu)
$
for some $N > 0$,
then the total number of evaluations for $\nabla f$ and
$h'$ can be bounded by
\beq \label{cor_bnd_grad}
{\cal O} \left( \sqrt{\frac{L {\cal D}_X}{\nu \epsilon}} \right)
\eeq
and
\beq \label{cor_bnd_subgrad}
{\cal O} \left\{\frac{M^2 {\cal D}_X }{\nu \epsilon^2}
+ \sqrt{ \frac{L {\cal D}_X}{\nu \epsilon} } \right\},
\eeq
respectively.
Moreover, the above two complexity bounds also hold if $X$ is bounded, and
$\{\beta_k\}$, $\{\gamma_k\}$, and 
$\{T_k\}$ are set to  \eqnok{spec_alpha_theta} with
$
\tilde D = 81 {\cal D}_X/(16 \nu).
$
\end{corollary}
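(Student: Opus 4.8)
The plan is to feed the parameter choices of Corollary~\ref{cor_deter_1} into the corresponding error bounds \eqnok{main_coro1} and \eqnok{main_coro2}, convert each into an iteration count $N$, and then translate $N$ into the number of $\nabla f$- and $h'$-evaluations.

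First I would handle the case governed by Corollary~\ref{cor_deter_1}.a). With $\{p_t\},\{\gamma_k\},\{\beta_k\},\{T_k\}$ as in \eqnok{spec_theta}--\eqnok{spec_alpha_theta} and $\tilde D = 3{\cal D}_X/(2\nu)$, all hypotheses of that corollary hold, so \eqnok{main_coro1} is in force. Substituting $\tilde D$ and using \eqnok{def_DX} to bound $V(x_0,x^*)\le{\cal D}_X$ collapses the bracketed expression to a single term of order ${\cal D}_X/\nu$, giving
\[
\Psi(\bar x_N)-\Psi(x^*)\le\frac{12\,L\,{\cal D}_X}{\nu\,N(N+1)}.
\]
Imposing that the right-hand side not exceed $\epsilon$ forces $N(N+1)\ge 12\,L{\cal D}_X/(\nu\epsilon)$, so it suffices to take $N={\cal O}\big(\sqrt{L{\cal D}_X/(\nu\epsilon)}\big)$. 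Since each outer iteration of the GS algorithm evaluates $\nabla f$ exactly once (at $\ux_{k-1}$), the number of gradient evaluations equals $N$, which is \eqnok{cor_bnd_grad}.

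Next I would count subgradient evaluations. Outer iteration $k$ invokes $\ProxS$ for $T_k$ inner iterations, and each inner iteration \eqnok{def_ut_iter} uses exactly one subgradient $h'(u_{t-1})$; hence the total is $\sum_{k=1}^N T_k$. Using $\lceil a\rceil\le a+1$ together with $\sum_{k=1}^N k^2={\cal O}(N^3)$,
\[
\sum_{k=1}^N T_k\le\sum_{k=1}^N\Big(\frac{M^2 N k^2}{\tilde D L^2}+1\Big)={\cal O}\!\Big(\frac{M^2 N^4}{\tilde D L^2}\Big)+N.
\]
Inserting $\tilde D=3{\cal D}_X/(2\nu)$ and $N={\cal O}\big(\sqrt{L{\cal D}_X/(\nu\epsilon)}\big)$, so that $N^4={\cal O}\big(L^2{\cal D}_X^2/(\nu^2\epsilon^2)\big)$, the first term reduces to ${\cal O}\big(M^2{\cal D}_X/(\nu\epsilon^2)\big)$; adding the second term $N={\cal O}\big(\sqrt{L{\cal D}_X/(\nu\epsilon)}\big)$ reproduces \eqnok{cor_bnd_subgrad} exactly.

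Finally, for bounded $X$ I would run the same argument with Corollary~\ref{cor_deter_1}.b) (or part~a) with the larger constant $\tilde D=81{\cal D}_X/(16\nu)$): with $\bar V(x^*)\le{\cal D}_X$ the bracket in \eqnok{main_coro2} again collapses to order ${\cal D}_X/\nu$, whence $\Psi(\bar x_N)-\Psi(x^*)={\cal O}\big(L{\cal D}_X/(\nu N^2)\big)$ and the same $N={\cal O}\big(\sqrt{L{\cal D}_X/(\nu\epsilon)}\big)$; and with $T_k=\lceil M^2(k+1)^3/(\tilde D L^2)\rceil$ the sum $\sum_{k=1}^N T_k={\cal O}\big(M^2 N^4/(\tilde D L^2)\big)+N$ is estimated verbatim, yielding the same two bounds. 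I do not anticipate any genuine obstacle; the only place needing care is the bookkeeping around the ceiling in $T_k$ and the $\sum k^2$ (resp.\ $\sum(k+1)^3$) estimate, to confirm that the ``$+N$'' term is at worst of the same order as the $\nabla f$-count and hence does not inflate \eqnok{cor_bnd_subgrad}.
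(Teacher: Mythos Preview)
Your proposal is correct and follows essentially the same route as the paper: invoke Corollary~\ref{cor_deter_1}.a) (resp.\ b)), use $V(x_0,x^*)\le{\cal D}_X$ (resp.\ $\bar V(x^*)\le{\cal D}_X$) together with the specified $\tilde D$ to collapse the bracket to ${\cal O}({\cal D}_X/\nu)$, solve for $N$, and then bound $\sum_{k=1}^N T_k$ via $\lceil a\rceil\le a+1$ and the polynomial sum. The paper's proof is identical up to writing out the explicit constants (e.g.\ $N\le\sqrt{6L{\cal D}_X/(\nu\epsilon)}$ and $\sum T_k\le 2\nu M^2 N(N+1)^3/(9{\cal D}_X L^2)+N$) rather than ${\cal O}(\cdot)$ notation.
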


\begin{proof}In view of Corollary~\ref{cor_deter_1}.a), if $\{\beta_k\}$, $\{\gamma_k\}$, and $\{T_k\}$
are set to \eqnok{spec_alpha_theta}, the total number of outer iterations (or gradient evaluations) performed by
the GS algorithm to find an $\epsilon$-solution of \eqnok{cp} can be bounded by
\beq \label{bnd_outer_N}
N \le \sqrt{\frac{L}{\epsilon} \left[ \frac{3 V(x_0, x^*)}{\nu} + 2 \tilde D \right]} \le \sqrt{\frac{6 L  {\cal D}_X }{\nu \epsilon}}.
\eeq
Moreover, using the definition of $T_k$ in \eqnok{spec_alpha_theta}, we conclude that
the total number of inner iterations (or subgradient evaluations) can be bounded by
\[
\sum_{k=1}^N T_k \le \sum_{k=1}^N \left( \frac{M^2 N  k^2 }{\tilde D L^2} + 1\right) 
\le \frac{M^2 N (N+1)^3 }{3 \tilde D L^2} + N 
=  \frac{2 \nu M^2 N (N+1)^3 }{9 {\cal D}_X L^2} + N, 
\]
which, in view of \eqnok{bnd_outer_N}, then clearly implies the bound in \eqnok{cor_bnd_subgrad}.
Using Corollary~\ref{cor_deter_1}.b) and similar arguments, we can show that 
the complexity bounds \eqnok{cor_bnd_grad}
and \eqnok{cor_bnd_subgrad} also hold when $X$ is bounded, and
$\{\beta_k\}$, $\{\gamma_k\}$, and 
$\{T_k\}$ are set to  \eqnok{spec_alpha_theta} .
\end{proof}

\vgap

In view of Corollary~\ref{main_bnd_d},
the GS algorithm can achieve the optimal complexity bound for solving
problem \eqnok{cp} in terms of the number of evaluations for both $\nabla f$ and
$h'$. To the best of our knowledge, this is the first time that this type of algorithm
has been developed in the literature.

It is also worth noting that we can relax the requirement on ${\cal D}_X$ in \eqnok{def_DX} to
$V(x_0, x^*) \le {\cal D}_X$ 
or $\max_{x \in X} V(x, x^*) \le {\cal D}_X$, respectively,
when the stepsize policies in \eqnok{spec_alpha_theta}
or in \eqnok{spec_alpha_theta_bnd} is used. Accordingly,
we can tighten 
the complexity bounds in \eqnok{cor_bnd_grad}
and \eqnok{cor_bnd_subgrad} by a constant factor.
  
\setcounter{equation}{0}
\section{Stochastic gradient sliding} \label{sec_SGS}
In this section, we consider the situation when the computation of 
stochastic subgradients of $h$ is much easier than that of exact
subgradients. This situation happens, for example, when $h$ is given in the
form of an expectation or as the summation of many nonsmooth components.
By presenting a stochastic gradient sliding (SGS) method,
we show that similar complexity bounds as in Section~\ref{sec_DGS} for solving problem~\eqnok{cp} 
can still be obtained in expectation or with high probability,
but the iteration cost of the SGS method can be substantially 
smaller than that of the GS method.    

More specifically, we assume that the nonsmooth component $h$ is represented by a stochastic oracle ($\SO$)
satisfying  \eqnok{assum_FO_a} and \eqnok{assum_FO_b}. Sometimes, we augment \eqnok{assum_FO_b} by a ``light-tail'' assumption:
\beq \label{assum_FO_c}
\bbe[\exp(\|H(u, \xi) - h'(u) \|_*^2/\sigma^2)] \le \exp(1).
\eeq
It can be easily seen that \eqnok{assum_FO_c} implies \eqnok{assum_FO_b} by Jensen's inequality.

The stochastic gradient sliding (SGS) algorithm is obtained by simply replacing the exact subgradients
in the $\ProxS$ procedure with the stochastic subgradients returned by the $\SO$. 
This algorithm is formally described as follows.

\begin{algorithm} [H]
	\caption{The stochastic gradient sliding (SGS) algorithm}
	\label{algGeneric-s}
	\begin{algorithmic}
\State 
The algorithm is the same as GS except that 
the identity \eqnok{def_ut_iter} in the $\ProxS$ procedure is replaced by
\beq \label{def_ut_iter_s}
u_{t} = \argmin_{u \in X} \left\{ g(u) + \langle H(u_{t-1}, \xi_{t-1}), u \rangle + \beta V(x, u) + \beta p_t V(u_{t-1}, u) + \cX(u) \right\}.
\eeq
The above modified $\ProxS$ procedure is called the $\SProxS$ (stochastic $\ProxS$) procedure.
\end{algorithmic}
\end{algorithm}

We add a few remarks about the above SGS algorithm. Firstly,
in this algorithm, we assume that the exact gradient of $f$ 
will be used throughout the $T_k$ inner iterations. This is different from
the accelerated stochastic approximation in \cite{Lan10-3}, where one 
needs to compute $\nabla f$ at each subgradient projection step. Secondly,
let us denote 
\beq \label{def_th}
\tilde l_h(u_{t-1}, u) := h(u_{t-1}) + \langle H(u_{t-1}, \xi_{t-1}), u - u_{t-1}\rangle. 
\eeq
It can be easily seen that \eqnok{def_ut_iter_s}
is equivalent to 
\beq \label{def_ut_iter_s1}
u_{t} = \argmin_{u \in X} \left\{ g(u) + \tilde l_h(u_{t-1}, u) + \beta V(x, u) + \beta p_t V(u_{t-1}, u) + \cX(u) \right\}.
\eeq
This problem reduces to \eqnok{def_ut_iter} if there is no stochastic noise associated with
the $\SO$, i.e., $\sigma = 0$ in \eqnok{assum_FO_b}.
Thirdly, note that we have not provided the specification of $\{\beta_k\}$,
$\{\gamma_k\}$, $\{T_k\}$, $\{p_t\}$ and $\{\theta_t\}$ in the SGS algorithm.
Similarly to Section~\ref{sec_DGS}, we will return to this issue after
establishing some convergence properties about the generic $\SProxS$ procedure
and SGS algorithm.
 



\vgap

The following result describes some important convergence properties
of the $\SProxS$ procedure.

\begin{proposition} \label{tech_inner_s}
Assume that $\{p_t\}$ and $\{\theta_t\}$ in the $\SProxS$ procedure
satisfy \eqnok{cond_theta}.
Then for any $t \ge 1$ and $u \in X$,
\begin{align}
\beta (1 - &P_{t})^{-1} V(u_t, u) + [\Phi(\tilde u_t)  - \Phi(u)]
\le \beta P_{t} (1 - P_{t})^{-1} V(u_{t-1}, u)  \, \, +  \nn \\
& P_{t} (1 - P_{t})^{-1} \sum_{i=1}^{t} (p_i P_{i-1})^{-1}
\left[ \frac{\left(M + \|\delta_{i}\|_*\right)^2}{2\nu \beta p_i }
+ \langle \delta_i, u - u_{i-1} \rangle \right], \label{inner_iter_rec_s}
\end{align}
where $\Phi$ is defined in \eqnok{generic_subproblem},
\beq \label{def_delta_t}
\delta_{t} := H(u_{t-1}, \xi_{t-1}) - h'(u_{t-1}), \ \ \mbox{and}  \ \ h'(u_{t-1}) = \bbe[H(u_{t-1}, \xi_{t-1})].
\eeq
\end{proposition}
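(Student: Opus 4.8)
The plan is to mimic the proof of Proposition~\ref{tech_inner} line by line, carrying the stochastic error term $\delta_t$ through the argument rather than absorbing it into the Lipschitz constant. First I would start from \eqnok{nonsmoothh1}, which gives $h(u_t) \le l_h(u_{t-1}, u_t) + M\|u_t - u_{t-1}\|$; but since the subproblem \eqnok{def_ut_iter_s1} uses $\tilde l_h$ rather than $l_h$, I would rewrite this as $h(u_t) \le \tilde l_h(u_{t-1}, u_t) - \langle \delta_t, u_t - u_{t-1}\rangle + M\|u_t - u_{t-1}\|$ using the definition \eqnok{def_th} and \eqnok{def_delta_t}. Adding $g(u_t) + \beta V(x,u_t) + \cX(u_t)$ to both sides yields the analogue of \eqnok{relation_Phi_phi}, namely $\Phi(u_t) \le g(u_t) + \tilde l_h(u_{t-1},u_t) + \beta V(x,u_t) + \cX(u_t) - \langle \delta_t, u_t - u_{t-1}\rangle + M\|u_t - u_{t-1}\|$.

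Next I would apply Lemma~\ref{tech1_prox} to the projection step \eqnok{def_ut_iter_s1} (with $q(\cdot) = g(\cdot) + \tilde l_h(u_{t-1},\cdot) + \cX(\cdot) + \beta V(x,\cdot)$, $\mu_1 V = \beta p_t V(u_{t-1},\cdot)$, and no second prox term), obtaining the bound on $g(u_t) + \tilde l_h(u_{t-1},u_t) + \cX(u_t) + \beta V(x,u_t) + \beta p_t V(u_{t-1},u_t)$ in terms of the value at an arbitrary $u$, with a $-\beta(1+p_t)V(u_t,u)$ term. I would then convert $\tilde l_h(u_{t-1},u)$ back to $h(u)$ via $\tilde l_h(u_{t-1},u) = l_h(u_{t-1},u) + \langle \delta_t, u - u_{t-1}\rangle \le h(u) + \langle \delta_t, u - u_{t-1}\rangle$, where the inequality uses convexity of $h$. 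Combining with the previous display and with the strong-convexity estimate $-\beta p_t V(u_{t-1},u_t) + (M + \|\delta_t\|_*)\|u_t - u_{t-1}\| \le (M+\|\delta_t\|_*)^2/(2\nu\beta p_t)$ — here I collect $M\|u_t-u_{t-1}\|$ from the $h(u_t)$ bound and $\|\delta_t\|_*\|u_t-u_{t-1}\|$ from Cauchy--Schwarz on $-\langle \delta_t, u_t - u_{t-1}\rangle$ — gives
\[
\Phi(u_t) - \Phi(u) \le \beta p_t V(u_{t-1},u) - \beta(1+p_t)V(u_t,u) + \frac{(M+\|\delta_t\|_*)^2}{2\nu\beta p_t} + \langle \delta_t, u - u_{t-1}\rangle.
\]

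From here the argument is purely deterministic bookkeeping identical to Proposition~\ref{tech_inner}: divide by $1+p_t$, invoke Lemma~\ref{tech_result_sum} with $w_k = 1/(1+p_t)$ and $W_k = P_t$ to telescope, use the identity $P_i(1+p_i)p_i = p_i^2 P_{i-1}$ and $P_i(1+p_i) = p_i P_{i-1}$ from \eqnok{cond_theta}, and finally use the averaging identity $\tilde u_t = \frac{P_t}{1-P_t}\sum_{i=1}^t \frac{1}{P_i(1+p_i)} u_i$ together with convexity of $\Phi$ (as in \eqnok{sum_theta}) to pass from $\sum_i \frac{\Phi(u_i)-\Phi(u)}{P_i(1+p_i)}$ to $\Phi(\tilde u_t) - \Phi(u)$. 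Rearranging produces \eqnok{inner_iter_rec_s}. The one genuinely new point — and the only place any care is needed — is keeping the error term split correctly: the $\|\delta_i\|_*$ inside the squared term comes from bounding $\|u_i - u_{i-1}\|$ against the prox penalty, whereas the separate linear term $\langle \delta_i, u - u_{i-1}\rangle$ survives unbounded because $u$ is arbitrary and $u_{i-1}$ (not $u_i$) appears, which is exactly what makes it amenable to a later martingale/expectation argument; I expect this splitting, rather than any inequality, to be the main thing to get right.
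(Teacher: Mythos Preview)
Your proposal is correct and follows the paper's proof essentially line by line: the paper also rewrites $l_h$ as $\tilde l_h - \langle \delta_t, \cdot - u_{t-1}\rangle$, applies Cauchy--Schwarz to absorb $-\langle \delta_t, u_t - u_{t-1}\rangle$ into $(M+\|\delta_t\|_*)\|u_t-u_{t-1}\|$, invokes Lemma~\ref{tech1_prox} on \eqnok{def_ut_iter_s1}, converts $\tilde l_h(u_{t-1},u)$ back via convexity of $h$ to produce the surviving term $\langle \delta_t, u-u_{t-1}\rangle$, and then telescopes exactly as in Proposition~\ref{tech_inner}. One small slip: when you invoke Lemma~\ref{tech1_prox}, do not place $\beta V(x,\cdot)$ inside $q$ with ``no second prox term''---that would yield only $-\beta p_t V(u_t,u)$; instead take $\mu_1=\beta$, $\tilde x=x$, $\mu_2=\beta p_t$, $\tilde y=u_{t-1}$, so that $\mu_1+\mu_2=\beta(1+p_t)$ gives the $-\beta(1+p_t)V(u_t,u)$ you (correctly) say you need.
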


\begin{proof}
Let 
$
\tilde l_h(u_{t-1}, u)$ be defined in \eqnok{def_th}.
Clearly, we have $\tilde l_h(u_{t-1}, u) - l_h(u_{t-1}, u) = \langle \delta_t, u - u_{t-1} \rangle$.
Using this observation and \eqnok{relation_Phi_phi}, we obtain
\begin{align*}
\Phi(u_t) &\le g(u) + l_h(u_{t-1}, u_t) + \beta V(x, u_t) + \cX(u_t) + M \|u_t - u_{t-1} \|\\
&= g(u) + \tilde l_h(u_{t-1}, u_t) - \langle \delta_t, u_t- u_{t-1} \rangle 
+ \beta V(x, u_t) + \cX(u_t) + M \|u_t - u_{t-1}\|\\
&\le g(u) + \tilde l_h(u_{t-1}, u_t) + \beta V(x, u_t) + \cX(u_t) + (M + \|\delta_t\|_*) \|u_t - u_{t-1}\|,
\end{align*}
where the last inequality follows from the Cauchy-Schwarz inequality.
Now applying Lemma~\ref{tech1_prox} to \eqnok{def_ut_iter_s}, we obtain
\begin{align*}
&g(u_t) + \tilde l_h(u_{t-1}, u_t) + \beta V(x, u_t)+ \beta p_t V(u_{t-1}, u_t) + \cX(u_t)  \\
&\le g(u) + \tilde l_h(u_{t-1}, u) + \beta V(x, u) + \beta p_t V(u_{t-1}, u)  + \cX(u) - \beta(1 + p_t) V(u_t, u)\\
&= g(u) + l_h(u_{t-1}, u) + \langle \delta_t, u - u_{t-1} \rangle \\
&\qquad+ \beta V(x, u)+ \beta p_t V(u_{t-1}, u)  + \cX(u) - \beta(1 + p_t) V(u_t, u)\\
&\le \Phi(u) + \beta p_t V(u_{t-1}, u) - \beta (1 + p_t) V(u_t, u) + \langle \delta_t, u - u_{t-1} \rangle,
\end{align*}
where the last inequality follows from the convexity of $h$ and \eqnok{generic_subproblem}.
Moreover, by the strong convexity of $\w$,
\begin{align*}
- \beta p_t V(u_{t-1}, u_t) &+ (M+\|\delta_t\|_*) \| u_t - u_{t-1}\| \\
&\le - \frac{\nu \beta p_t }{2} \|u_t - u_{t-1}\|^2 + (M + \|\delta_t\|_*) \|u_t - u_{t-1}\|  
\le \frac{\left(M+\|\delta_t\|_*\right)^2}{2\nu \beta p_t},
\end{align*}
where the last inequality follows from the simple fact that $- a t^2 / 2 + b t \le b^2/(2a)$ for any $a > 0$.
Combining the previous three inequalities, we conclude that
\begin{align*}
\Phi(u_t)  - \Phi(u) & \le  \beta p_t V(u_{t-1}, u) - \beta (1 + p_t) V(u_t, u) 
+ \frac{\left(M + \|\delta_t\|_*\right)^2}{2\nu \beta p_t}
+ \langle \delta_t, u - u_{t-1} \rangle.
\end{align*}
Now dividing both sides of the above inequality by $1 + p_t$ and re-arranging the terms, we obtain
\begin{align*}
\beta V(u_t, u) + \frac{\Phi(u_t)  - \Phi(u)} {1+p_t} 
&\le \frac{\beta p_t}{1+ p_t} V(u_{t-1}, u) + \frac{\left(M + \|\delta_t\|_*\right)^2}{2\nu \beta (1+p_t) p_t }
+ \frac{\langle \delta_t, u - u_{t-1} \rangle}{1 + p_t},
\end{align*}
which, in view of Lemma~\ref{tech_result_sum}, then implies that
\begin{align} 
\frac{\beta}{P_t} V(u_t, u) + \sum_{i=1}^t \frac{\Phi(u_i)  - \Phi(u)} {P_i (1+p_i)}
&\le \beta V(u_{0}, u) \nn\\
&+  \sum_{i=1}^t \left[ \frac{\left(M + \|\delta_i\|_*\right)^2}{2\nu \beta P_i (1+p_i) p_i } 
+ \frac{\langle \delta_i, u - u_{i-1} \rangle}{P_i (1 + p_i)} \right]. \label{inner_iter_basic_s}
\end{align}
The result then immediately follows from the above inequality and \eqnok{sum_theta}.
\end{proof}

\vgap

It should be noted that the 
search points $\{u_t\}$ generated by different calls to the $\SProxS$ procedure
in different outer iterations of the SGS algorithm
are distinct from each other. To avoid ambiguity, we use  
$u_{k,t}$, $k \ge 1$, $t\ge 0$, to denote the search points generated by the $\SProxS$ procedure
in the $k$-th outer iteration.
Accordingly, we use 
\beq \label{def_delta_kt}
\delta_{k,t-1} := H(u_{k, t-1}, \xi_{t-1}) - h'(u_{k, t-1}), \ \ k \ge 1, t \ge 1,
\eeq
to denote the stochastic noises associated with the $\SO$.
Then, by \eqnok{inner_iter_rec_s}, the definition of $\Phi_k$ in \eqnok{acc2}, and
the origin of $(x_k, \tilde x_k)$ in the SGS algorithm, we have
\begin{align}
\beta_k(1& - P_{T_k})^{-1} V(x_k, u) + [\Phi_k(\tilde x_k)  - \Phi_k(u)]
\le \beta_k P_{T_k} (1 - P_{T_k})^{-1} V(x_{k-1}, u)  \, \, +  \nn \\
& P_{T_k} (1 - P_{T_k})^{-1} \sum_{i=1}^{T_k} \frac{1}{p_i P_{i-1}}
\left[ \frac{\left(M + \|\delta_{k,i-1}\|_*\right)^2}{2\nu \beta_k p_i }
+ \langle \delta_{k,i-1}, u - u_{k,i-1} \rangle \right] \label{inner_iter_rec_s_v}
\end{align}
for any $u \in X$ and $k \ge 1$.

\vgap

With the help of \eqnok{inner_iter_rec_s_v},
we are now ready to establish the main convergence properties of the
SGS algorithm. 

\begin{theorem} \label{main_theorem_s}
Suppose that $\{p_t\}$, $\{\theta_t\}$, $\{\beta_k\}$, and $\{\gamma_k\}$ in the SGS algorithm
satisfy \eqnok{cond_theta} and \eqnok{cond_gamma_beta}. 
\begin{itemize}
\item [a)] If relation \eqnok{cond_gamma_beta_alpha} holds,
then under Assumptions \eqnok{assum_FO_a} and \eqnok{assum_FO_b},
we have, for any $N \ge 1$, 
\begin{align} 
\bbe\left[\Psi(\bx_N) - \Psi(x^*)\right] 
\le \tilde {\cal B}_d(N) &:= \frac{\Gamma_N \beta_1}{1 - P_{T_1}} V(x_0, u) \nn\\
 &\quad + \frac{\Gamma_N}{\nu} \sum_{k=1}^N  
\sum_{i=1}^{T_k} \frac{(M^2 + \sigma^2)\gamma_k P_{T_k}}{\beta_k \Gamma_k (1 - P_{T_k}) p_i^2 P_{i-1}},
\label{main_result_smooth_exp}
\end{align}
where
$x^*$ is an arbitrary optimal solution of \eqnok{cp}, and
$P_t$ and $\Gamma_k$ are defined in \eqnok{def_but} and \eqnok{def_Gamma},
respectively.
\item [b)] If in addition, $X$ is compact and Assumption~\eqnok{assum_FO_c} holds, then
\beq \label{main_result_smooth_prob}
\prob \left\{\Psi(\bx_N) - \Psi(x^*) \ge \tilde {\cal B}_d(N) + \lambda  {\cal B}_p(N) \right\} \le \exp\left\{-2\lambda^2/3\right\}
+ \exp\left\{-\lambda\right\},
\eeq

for any $\lambda > 0$ and $N \ge 1$, where
\begin{align} 
\tilde {\cal B}_p(N) &:= \sigma \Gamma_N \left\{\frac {2 \bar V(x^*)}{\nu} \sum_{k=1}^N \sum_{i=1}^{T_k}
\left[\frac{\gamma_k P_{T_k}}{\Gamma_k (1 - P_{T_k}) p_i P_{i-1}}\right]^2 \right\}^\frac{1}{2} \nn \\
& \quad+ \frac{\Gamma_N}{\nu} \sum_{k=1}^N \sum_{i=1}^{T_k}  \frac{\sigma^2 \gamma_k P_{T_k} }{\beta_k \Gamma_k (1 - P_{T_k}) p_i^2 P_{i-1}}.
\label{def_BP}
\end{align}
\item [c)] If $X$ is compact and relation \eqnok{cond_gamma_beta_alpha_bnd} (instead of \eqnok{cond_gamma_beta_alpha})
holds, then both part a) and part b) still hold by replacing the first term
in the definition of   $\tilde {\cal B}_d(N)$ with $
\gamma_{N} \beta_{N} \bar V(x^*)/(1 - P_{T_{N}}).
$
\end{itemize}
\end{theorem}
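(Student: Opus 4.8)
The plan is to convert the per-inner-iteration recursion \eqnok{inner_iter_rec_s_v} into a master recursion for the outer iterates, exactly as in the deterministic Theorem~\ref{main_theorem_d}, except that each step now carries two extra stochastic terms: a ``variance'' term involving $\|\delta_{k,i-1}\|_*^2$ and a ``martingale'' term involving $\langle \delta_{k,i-1}, u - u_{k,i-1}\rangle$. First I would mimic the chain of inequalities \eqnok{smoothness_f_enhance}--\eqnok{bnd_outer1} verbatim (they use only smoothness of $f$, convexity of $h,\cX$, and \eqnok{cond_gamma_beta}, which still hold), arriving at
\[
\Psi(\bx_k) - \Psi(u) \le (1-\gamma_k)[\Psi(\bx_{k-1})-\Psi(u)] + \gamma_k[\Phi_k(\tilde x_k) - \Phi_k(u) + \beta_k V(x_{k-1},u)].
\]
Then I would plug in \eqnok{inner_iter_rec_s_v} in place of \eqnok{inner_iter_rec}, obtaining the stochastic analogue of \eqnok{GS_recursion}:
\[
\Psi(\bx_k)-\Psi(u) \le (1-\gamma_k)[\Psi(\bx_{k-1})-\Psi(u)] + \tfrac{\gamma_k\beta_k}{1-P_{T_k}}[V(x_{k-1},u)-V(x_k,u)] + \tfrac{\gamma_k P_{T_k}}{1-P_{T_k}}\sum_{i=1}^{T_k}\tfrac{1}{p_i P_{i-1}}\Bigl[\tfrac{(M+\|\delta_{k,i-1}\|_*)^2}{2\nu\beta_k p_i} + \langle\delta_{k,i-1}, u-u_{k,i-1}\rangle\Bigr].
\]
Applying Lemma~\ref{tech_result_sum} with $w_k=\gamma_k$, $W_k=\Gamma_k$ and then telescoping the $V(x_{k-1},u)-V(x_k,u)$ terms using \eqnok{cond_gamma_beta_alpha} (just as in \eqnok{bnd_dist1}), with $u=x^*$, gives a deterministic bound plus a sum $S_N$ of the stochastic terms. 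Using $(M+\|\delta\|_*)^2 \le 2M^2 + 2\|\delta\|_*^2$ separates $S_N$ into the $M^2$ part (already in $\tilde{\cal B}_d$), a part $\propto \sum \|\delta_{k,i-1}\|_*^2$, and the linear part $\propto \sum \langle \delta_{k,i-1}, x^*-u_{k,i-1}\rangle$.

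For part a): take expectations. Since $\xi_{t-1}$ is independent of $u_{k,i-1}$ (which depends only on $\xi_0,\dots,\xi_{i-2}$), \eqnok{assum_FO_a} gives $\bbe[\langle\delta_{k,i-1}, x^*-u_{k,i-1}\rangle]=0$, and \eqnok{assum_FO_b} gives $\bbe[\|\delta_{k,i-1}\|_*^2]\le\sigma^2$. So the $\|\delta\|_*^2$ part contributes exactly the $\sigma^2$ enhancement that turns $M^2$ into $M^2+\sigma^2$ in \eqnok{main_result_smooth_exp}; one just has to track that the coefficient $2\nu\beta_k p_i$ in the denominator combines with the earlier factor-of-two to reproduce the $\tfrac{1}{\nu}$ (rather than $\tfrac{1}{2\nu}$) appearing in $\tilde{\cal B}_d$. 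Part c) is identical except one telescopes using \eqnok{cond_gamma_beta_alpha_bnd} and the bound $V(x_k,u)\le\bar V(u)$ as in \eqnok{bnd_dist2}, replacing the first term of $\tilde{\cal B}_d$.

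For part b) I would not take expectations but instead control the two random sums as martingale-type quantities under the light-tail assumption \eqnok{assum_FO_c}. Write $a_{k,i} := \tfrac{\Gamma_N\gamma_k P_{T_k}}{\nu\Gamma_k(1-P_{T_k})p_i P_{i-1}}$ for the linear part and $b_{k,i}$ (with an extra $1/(\beta_k p_i)$) for the quadratic part, so the deviation is $\sum a_{k,i}\langle\delta_{k,i-1},x^*-u_{k,i-1}\rangle + \sum b_{k,i}\|\delta_{k,i-1}\|_*^2$ minus its mean. The sequence $\{a_{k,i}\langle\delta_{k,i-1},x^*-u_{k,i-1}\rangle\}$ ordered lexicographically in $(k,i)$ is a martingale difference sequence with respect to the natural filtration; $|a_{k,i}\langle\delta_{k,i-1},x^*-u_{k,i-1}\rangle| \le a_{k,i}\|\delta_{k,i-1}\|_*\|x^*-u_{k,i-1}\|$, and using $V(u_{k,i-1},x^*)\le\bar V(x^*)$ together with the quadratic-growth/strong-convexity relation one bounds $\|x^*-u_{k,i-1}\|^2 \le 2\bar V(x^*)/\nu$; then \eqnok{assum_FO_c} gives the sub-Gaussian-type conditional moment generating bound $\bbe[\exp(s\,a_{k,i}\langle\cdot\rangle)\,|\,\mathcal F_{k,i-1}] \le \exp(s^2 a_{k,i}^2 \cdot \tfrac{2\bar V(x^*)\sigma^2}{\nu})$ for the right range of $s$. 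The standard large-deviation lemma for martingales with such bounds (e.g. Lemma~2 of \cite{Lan10-3} / the Bernstein-type argument in \cite{jnt08}) then yields
\[
\prob\Bigl\{\sum a_{k,i}\langle\delta_{k,i-1},x^*-u_{k,i-1}\rangle \ge \lambda\,\sigma\sqrt{\tfrac{2\bar V(x^*)}{\nu}\sum a_{k,i}^2}\Bigr\} \le \exp\{-2\lambda^2/3\},
\]
which is precisely the first term of ${\cal B}_p(N)$ (after substituting the value of $a_{k,i}$; note $\Gamma_N$ factors out). For the quadratic part, $\|\delta_{k,i-1}\|_*^2$ is nonnegative with $\bbe[\exp(\|\delta\|_*^2/\sigma^2)]\le\exp(1)$, so $\sum b_{k,i}\|\delta_{k,i-1}\|_*^2$ concentrates above its mean $\le \sigma^2\sum b_{k,i}$ by the classical one-sided bound $\prob\{\sum b_{k,i}\|\delta_{k,i-1}\|_*^2 \ge (1+\lambda)\sigma^2\sum b_{k,i}\}\le\exp\{-\lambda\}$ (convexity of $\exp$ plus Markov), contributing the second term of ${\cal B}_p(N)$. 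A union bound over the two events gives \eqnok{main_result_smooth_prob}.

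The main obstacle I anticipate is part b): the linear term is a martingale only after one checks carefully that $u_{k,i-1}$ is $\mathcal F_{k,i-1}$-measurable \emph{across} outer iterations (the $\underline x_k$, $x_{k-1}$, and hence the whole $\SProxS$ state at outer iteration $k$ depend on all earlier stochastic subgradients), and the conditional sub-Gaussian bound under \eqnok{assum_FO_c} requires the uniform bound $\|x^*-u_{k,i-1}\| \le \sqrt{2\bar V(x^*)/\nu}$, which is exactly where compactness of $X$ and the strong convexity modulus $\nu$ enter. Everything else — the master recursion, the telescoping, the $M^2\mapsto M^2+\sigma^2$ bookkeeping, and part c) — is a mechanical transcription of the deterministic proofs of \eqnok{GS_recursion} and Theorem~\ref{main_theorem_d} with the two $\delta$-terms carried along.
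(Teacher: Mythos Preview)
Your proposal is correct and follows essentially the same route as the paper: combine the outer recursion \eqnok{bnd_outer1} with the stochastic inner bound \eqnok{inner_iter_rec_s_v}, telescope via Lemma~\ref{tech_result_sum} and \eqnok{bnd_dist1} (or \eqnok{bnd_dist2} for part c)), use $(M+\|\delta\|_*)^2\le 2(M^2+\|\delta\|_*^2)$, then for a) take expectations and for b) apply the martingale large-deviation lemma to the linear part and the convexity-of-$\exp$ plus Markov argument to the quadratic part, followed by a union bound. The only slip is cosmetic: your coefficient $a_{k,i}$ for the martingale term should not carry the factor $1/\nu$ (that factor belongs only to the quadratic/variance term), but this does not affect the argument.
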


\begin{proof}
Using \eqnok{bnd_outer1} and \eqnok{inner_iter_rec_s_v}, we have
\begin{align*}
\Psi(\bx_k) - \Psi(u)  
&\le  (1- \gamma_k) [\Psi(\bx_{k-1})  - \Psi(u)]
+ \gamma_k \left\{
\frac{\beta_k}{1 - P_{T_k}} [V(x_{k-1}, u) - V(x_k, u)] + \right. \\
 & \left. \frac{P_{T_k}}{1 - P_{T_k} } 
\sum_{i=1}^{T_k} \frac{1}{p_i P_{i-1}} \left[\frac{\left(M + \|\delta_{k,i-1}\|_*\right)^2}{2\nu \beta_k p_i } 
+ \langle \delta_{k,i-1}, u - u_{k,i-1} \rangle \right]\right\}.
\end{align*}
Using the above inequality and Lemma~\ref{tech_result_sum}, we conclude that
\begin{align}
\Psi&(\bx_N) - \Psi(u)   \le \Gamma_N (1- \gamma_1) [\Psi(\bx_{0})  - \Psi(u)] \nn\\
&+ \Gamma_N 
\sum_{k=1}^N \frac{\beta_k \gamma_k}{\Gamma_k (1 - P_{T_k})}
\left[
V(x_{k-1}, u) - V(x_k, u) \right] + \Gamma_N \sum_{k=1}^N 
 \frac{\gamma_k P_{T_k}}{\Gamma_k (1 - P_{T_k}) } 
\nn \\
& \quad \sum_{i=1}^{T_k} \frac{1}{p_i P_{i-1}} \left[\frac{\left(M + \|\delta_{k,i-1}\|_*\right)^2}{2\nu \beta_k p_i } 
+ \langle \delta_{k,i-1}, u - u_{k,i-1} \rangle \right]. \nn
\end{align}
The above relation, in view of \eqnok{bnd_dist1} and the fact that $\gamma_1 = 1$, then implies that
\begin{align} 
\Psi(\bx_N) - \Psi(u) &\le \frac{\beta_k}{1 - P_{T_1}} V(x_0, u)  + \Gamma_N \sum_{k=1}^N  
 \frac{\gamma_k P_{T_k}}{\Gamma_k (1 - P_{T_k}) } \nn \\
& \quad
\sum_{i=1}^{T_k} \frac{1}{p_i P_{i-1}} \left[\frac{M^2 + \|\delta_{k,i-1}\|_*^2}{\nu \beta_k p_i } 
+ \langle \delta_{k,i-1}, u - u_{k,i-1} \rangle \right]. \label{stoch_main_rec_ab}
\end{align}
We now provide bounds on the RHS of \eqnok{stoch_main_rec_ab} in expectation or with high probability.

We first show part a). Note that by our assumptions on the $\SO$, the random variable $\delta_{k,i-1}$ is independent of
the search point $u_{k, i-1}$ and hence $\bbe[\langle \Delta_{k, i-1},  x^* - u_{k, i}\rangle]=0$.
In addition, Assumption \ref{assum_FO_b} implies that $\bbe[\|\delta_{k,i-1}\|_*^2]\leq \sigma^2$. Using
the previous two observations and taking expectation on both sides of \eqnok{stoch_main_rec_ab} (with $u = x^*$), 
we obtain \eqnok{main_result_smooth_exp}.

We now show that part b) holds. Note that by our assumptions on the $\SO$
and the definition of $u_{k,i}$, the sequence $\{\langle \delta_{k,i-1}, x^* - u_{k, i-1}\rangle\}_{k \ge 1, 1 \le i \le T_k}$
is a martingale-difference sequence. 
Denoting
\[
\alpha_{k,i} := \frac{\gamma_k P_{T_k}}{\Gamma_k (1 - P_{T_k}) p_i P_{i-1}},
\]
and
using the large-deviation
theorem for martingale-difference sequence (e.g., Lemma 2 of \cite{lns11})
and the fact that
	\begin{align*}
				& \bbe\left[\exp\left\{\nu \alpha_{k,i}^2 \langle \delta_{k,i-1},  x^* - u_{k,i}\rangle^2 / \left(2\alpha_{k,i}^2 \bar V(x^*)\sigma^2 \right) \right\} \right]\\
				&\leq \bbe\left[\exp\left\{\nu \alpha_{k,i}^2\|\delta_{k,i-1}\|_*^2\| x^* - u_{k,i}\|^2 / \left(2\bar V(x^*)\sigma^2 \right) \right\} \right]\\
				&\leq \bbe\left[\exp\left\{\|\delta_{k,i-1}\|_*^2 V( u_{k,i},  x^*) / \left(\bar V(x^*)\sigma^2 \right) \right\} \right]\\
				&\leq  \bbe\left[\exp\left\{\|\delta_{k,i-1}\|_*^2/ \sigma^2 \right\} \right] \leq \exp\{1\},
			\end{align*}
we conclude that
\beq \label{bnd_prob1}
\begin{array}{l}
\prob\left\{\sum_{k=1}^N \sum_{i=1}^{T_k}\alpha_{k,i}\langle\delta_{k,i-1}, x^* - u_{k, i-1}\rangle
>\lambda \sigma \sqrt{\frac {2 \bar V(x^*)}{\nu} \sum_{k=1}^N \sum_{i=1}^{T_k}\alpha_{k,i}^2 }  \right\}\\
\leq \exp\{-\lambda^2/3 \}, \forall \lambda >0.
\end{array}
\eeq
Now let 
\[
S_{k,i} := \frac{\gamma_k P_{T_k}}{\beta_k \Gamma_k (1 - P_{T_k}) p_i^2 P_{i-1}}
\]
and $S:=\sum_{k=1}^N \sum_{i=1}^{T_k}S_{k,i}$. By the convexity of exponential function, we have
	\[
			\begin{array}{l}
				\bbe\left[\exp\left\{\frac 1S \sum_{k=1}^N \sum_{i=1}^{T_k}
				S_{k,i}{\|\delta_{k,i}\|_*^2}/{\sigma^2} \right\}\right] \\
				\leq \bbe\left[\frac 1S \sum_{k=1}^N \sum_{i=1}^{T_k}S_i
				\exp\left\{\|\delta_{k,i}\|_*^2/\sigma^2 \right\}\right]\leq\exp\{1\}.
			\end{array}
\]
where the last inequality follows from Assumption \ref{assum_FO_c}. Therefore, by Markov's inequality, for all $\lambda>0$,
\beq \label{bnd_prob2}
			\begin{array}{l}
				\prob\left\{ \sum_{k=1}^N \sum_{i=1}^{T_k} S_{k,i} \|\delta_{k,i-1}\|_*^2 > 
				(1+\lambda)\sigma^2 \sum_{k=1}^N \sum_{i=1}^{T_k} S_{k,i}\right\} \\
				=  \prob\left\{\exp\left\{\frac 1S \sum_{k=1}^N \sum_{i=1}^{T_k}S_{k,i}
				{\|\delta_{k,i-1}\|_*^2}/{\sigma^2} \right\}\geq \exp\{1+\lambda\} \right\}
				\leq 	\exp\{-\lambda\}.
			\end{array}
\eeq
Our result now directly follows from \eqnok{stoch_main_rec_ab}, \eqnok{bnd_prob1} and \eqnok{bnd_prob2}.
The proof of part c) is very similar to part a) and b) in view of the bound in \eqnok{bnd_dist2},
and hence the details are skipped.
\end{proof}

\vgap

We now provide some specific choices for the parameters $\{\beta_k\}$, $\{\gamma_k\}$,
$\{T_k\}$, $\{p_t\}$, and $\{\theta_t\}$ used in the SGS algorithm. In particular, while the stepsize policy
in Corollary~\ref{stoch_corol1}.a) requires the number of iterations $N$
given a priori, such an assumption is not needed in Corollary~\ref{stoch_corol1}.b) given that $X$
is bounded. However, in order to provide some large-deviation results associated with
the rate of convergence for the SGS algorithm (see \eqnok{stoch_corol1_b} and
\eqnok{stoch_corol2_b} below), we need to assume the boundness of $X$ 
in both Corollary~\ref{stoch_corol1}.a) and Corollary~\ref{stoch_corol1}.b).

\begin{corollary} \label{stoch_corol1}
Assume that $\{p_t\}$ and $\{\theta_t\}$ in the $\SProxS$ procedure are set to \eqnok{spec_theta}.
\begin{itemize}
\item [a)] If $N$ is given a priori, $\{\beta_k\}$ and $\{\gamma_k\}$ are set to \eqnok{spec_alpha_theta}, and
$\{T_k\}$ is given by
\beq \label{def_TK_S}
T_k = \left \lceil \frac{N (M^2 + \sigma^2) k^2}{\tilde D L^2} \right \rceil
\eeq
for some $\tilde D > 0$. Then under 
Assumptions \eqnok{assum_FO_a} and \eqnok{assum_FO_b}, we have
\beq \label{stoch_corol1_a}
\bbe\left[\Psi(\bx_N) - \Psi(x^*)\right] \le \frac{2 L}{N (N+1) } \left[ \frac{3 V(x_0, x^*)}{\nu} + 4 \tilde D \right], \ \ \forall N \ge 1.
\eeq
If in addition, $X$ is compact and Assumption \eqnok{assum_FO_c} holds, then
\begin{align}
\prob &\left\{\Psi(\bx_N) - \Psi(x^*) \ge
\frac{2 L}{N (N+1) } \left[ \frac{3 V(x_0, x^*)}{\nu} + 4 (1 + \lambda) \tilde D
+ \frac{4 \lambda \sqrt{\tilde D \bar V(x^*)}}{\sqrt{3 \nu}}\right]\right\} \nn \\ 
&\le \exp\left\{-2\lambda^2/3\right\}
+ \exp\left\{-\lambda\right\}, \ \forall \lambda > 0, \, \forall N \ge 1.  \label{stoch_corol1_b}
\end{align}
\item [b)] If $X$ is compact, $\{\beta_k\}$ and $\{\gamma_k\}$ are set to
\eqnok{spec_alpha_theta_bnd}, and $\{T_k\}$ is given by
\beq \label{def_TK_S_bnd}
T_k = \left \lceil \frac{(M^2 + \sigma^2) (k+1)^3}{\tilde D L^2} \right \rceil
\eeq
for some $\tilde D > 0$. Then under 
Assumptions \eqnok{assum_FO_a} and \eqnok{assum_FO_b}, we have
\beq \label{stoch_corol2_a}
\bbe\left[\Psi(\bx_N) - \Psi(x^*)\right] \le \frac{L}{(N+1)(N+2) } \left[ \frac{27  \bar V(x^*)}{2 \nu} + \frac{16 \tilde D}{3} \right], \ \ \forall N \ge 1.
\eeq
If in addition, Assumption \eqnok{assum_FO_c} holds, then
\begin{align}
\prob &\left\{\Psi(\bx_N) - \Psi(x^*) \ge
\frac{L}{N (N+2) } \left[ \frac{27 \bar V(x^*)}{2\nu} + \frac{8}{3} (2 + \lambda) \tilde D
+ \frac{12 \lambda \sqrt{2 \tilde D \bar V(x^*)}}{\sqrt{3 \nu}}\right]\right\} \nn \\ 
&\le \exp\left\{-2\lambda^2/3\right\}
+ \exp\left\{-\lambda\right\}, \ \forall \lambda > 0, \, \forall N \ge 1. \label{stoch_corol2_b}
\end{align}
\end{itemize}

\end{corollary}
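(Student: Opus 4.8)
The plan is to deduce Corollary~\ref{stoch_corol1} from Theorem~\ref{main_theorem_s} exactly as Corollary~\ref{cor_deter_1} was deduced from Theorem~\ref{main_theorem_d}: verify that the prescribed parameters satisfy the hypotheses of the theorem, and then upper-bound the explicit quantities $\tilde{\cal B}_d(N)$ and $\tilde{\cal B}_p(N)$. For the verification, note that with $p_t$ and $\theta_t$ as in \eqnok{spec_theta} the condition \eqnok{cond_theta} holds and $P_t=2/((t+1)(t+2))$ by \eqnok{compute_Pt}, while the choices \eqnok{spec_alpha_theta} (resp.\ \eqnok{spec_alpha_theta_bnd}) for $\{\beta_k\}$, $\{\gamma_k\}$ are precisely those treated in the proof of Corollary~\ref{cor_deter_1}; hence \eqnok{cond_gamma_beta}, the monotonicity relation \eqnok{cond_gamma_beta_alpha} (resp.\ \eqnok{cond_gamma_beta_alpha_bnd}), the values $\Gamma_k=2/(k(k+1))$ (resp.\ $6/(k(k+1)(k+2))$), and $P_{T_k}\le P_{T_1}\le 1/3$ as in \eqnok{rel_Q} are all inherited verbatim. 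Consequently part~a) follows from Theorem~\ref{main_theorem_s}.a)--b) and part~b) from Theorem~\ref{main_theorem_s}.c), and the task reduces to estimating $\tilde{\cal B}_d(N)$ and $\tilde{\cal B}_p(N)$ under the new choices of $\{T_k\}$ in \eqnok{def_TK_S} and \eqnok{def_TK_S_bnd}.

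For the in-expectation bounds \eqnok{stoch_corol1_a} and \eqnok{stoch_corol2_a}, observe that $\tilde{\cal B}_d(N)$ in \eqnok{main_result_smooth_exp} is obtained from the deterministic quantity ${\cal B}_d(N)$ by replacing $M^2$ with $M^2+\sigma^2$, while \eqnok{def_TK_S} (resp.\ \eqnok{def_TK_S_bnd}) enlarges $T_k$ by the matching factor. Using $\sum_{i=1}^{T_k}(p_i^2P_{i-1})^{-1}\le 4T_k$ from \eqnok{rel_Q1} together with \eqnok{bnd_2nd} (resp.\ \eqnok{bnd_2nd_b}), the second term of $\tilde{\cal B}_d(N)$ is bounded by a constant times $\Gamma_N(M^2+\sigma^2)\sum_{k=1}^N k^2/(L(T_k+3))$ (resp.\ $\Gamma_N(M^2+\sigma^2)\sum_{k=1}^N k(k+1)^2/(LT_k)$); since $T_k\ge N(M^2+\sigma^2)k^2/(\tilde D L^2)$ (resp.\ $T_k\ge(M^2+\sigma^2)(k+1)^3/(\tilde D L^2)$), each summand is $O(\tilde D L/N)$ (resp.\ $O(\tilde D L)$), so the whole term is $O(\Gamma_N\tilde D L)$. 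Bounding the first term of $\tilde{\cal B}_d(N)$ exactly as in the proof of Corollary~\ref{cor_deter_1} via $P_{T_1}\le 1/3$ and substituting $\Gamma_N=2/(N(N+1))$ (resp.\ $\Gamma_N=6/(N(N+1)(N+2))$, with the first term replaced by $\gamma_N\beta_N\bar V(x^*)/(1-P_{T_N})$ in part~b) gives precisely the stated coefficients.

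For the large-deviation estimates \eqnok{stoch_corol1_b} and \eqnok{stoch_corol2_b} I would bound $\tilde{\cal B}_p(N)$ from \eqnok{def_BP}. Its second (variance) summand has the same form as the second term of $\tilde{\cal B}_d(N)$ with $M^2+\sigma^2$ replaced by $\sigma^2\le M^2+\sigma^2$, so it is again $O(\Gamma_N\tilde D L)$, which merges into the $\tilde D$ coefficient. The first (square-root) summand needs one genuinely new computation: with $p_i=i/2$ and $P_{i-1}=2/(i(i+1))$ one has $p_iP_{i-1}=1/(i+1)$, so the quantity $\alpha_{k,i}=\gamma_kP_{T_k}/(\Gamma_k(1-P_{T_k})p_iP_{i-1})$ satisfies $\sum_{i=1}^{T_k}\alpha_{k,i}^2=(\gamma_kP_{T_k}/(\Gamma_k(1-P_{T_k})))^2\sum_{i=1}^{T_k}(i+1)^2$; since $\sum_{i=1}^{T_k}(i+1)^2=O(T_k^3)$ and $P_{T_k}/(1-P_{T_k})=2/(T_k(T_k+3))$, this is $O(k^2/T_k)$ (resp.\ $O(k(k+1)^2/T_k)$), and summing over $k$ with the same lower bound on $T_k$ yields $\sum_{k=1}^N\sum_{i=1}^{T_k}\alpha_{k,i}^2=O(\tilde D L^2/(M^2+\sigma^2))$. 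Inserting this into the square-root term, the prefactor $\sigma/\sqrt{M^2+\sigma^2}\le1$ absorbs the $\sigma$ and leaves $O(\Gamma_N L\sqrt{\tilde D\bar V(x^*)/\nu})$. Combining with the bound on $\tilde{\cal B}_d(N)$ in \eqnok{main_result_smooth_prob} (via Theorem~\ref{main_theorem_s}.c) for part~b) produces \eqnok{stoch_corol1_b} and \eqnok{stoch_corol2_b}.

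The only real obstacle is bookkeeping rather than conceptual: one must track the numerical constants in the square-root term of $\tilde{\cal B}_p(N)$ carefully enough that they assemble into the clean coefficients $4\lambda\sqrt{\tilde D\bar V(x^*)}/\sqrt{3\nu}$ and $12\lambda\sqrt{2\tilde D\bar V(x^*)}/\sqrt{3\nu}$ displayed in the statement, and one must be vigilant that at every step it is the \emph{lower} bound $T_k\ge(\cdot)$ arising from the ceiling in \eqnok{def_TK_S}/\eqnok{def_TK_S_bnd}---not the value of $T_k$ itself---that is used to cancel the $k^2$ and $k(k+1)^2$ growth appearing in the numerators.
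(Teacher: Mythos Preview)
Your approach is correct and matches the paper's proof essentially line for line: verify the hypotheses of Theorem~\ref{main_theorem_s} by recycling the computations from Corollary~\ref{cor_deter_1}, bound $\tilde{\cal B}_d(N)$ via \eqnok{bnd_2nd}/\eqnok{bnd_2nd_b}, and bound the new square-root term in $\tilde{\cal B}_p(N)$ by evaluating $\sum_{i=1}^{T_k}\alpha_{k,i}^2$ explicitly using $p_iP_{i-1}=1/(i+1)$. Two small slips to correct when you execute it: (i) $\tilde{\cal B}_d(N)$ differs from ${\cal B}_d(N)$ by replacing $M^2/2$ with $M^2+\sigma^2$ (not $M^2$ with $M^2+\sigma^2$), which is why the coefficient becomes $4\tilde D$ rather than $2\tilde D$; (ii) in part~b) one has $\gamma_k/\Gamma_k=k(k+1)/2$, so $\sum_i\alpha_{k,i}^2=O(k^2(k+1)^2/T_k)$, not $O(k(k+1)^2/T_k)$---the missing factor of $k$ is needed to obtain the $N(N+1)$ under the square root that produces the stated $1/(N(N+2))$ rate in \eqnok{stoch_corol2_b}.
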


\begin{proof}
We first show part a). It can be easily seen from \eqnok{compute_Gamma} that \eqnok{cond_gamma_beta} holds. Moreover, 
Using \eqnok{spec_alpha_theta}, \eqnok{rel_Q}, and \eqnok{compute_Gamma}, we can easily see that \eqnok{cond_gamma_beta_alpha} holds.
By  \eqnok{rel_Q}, \eqnok{compute_Gamma}, \eqnok{bnd_2nd},  \eqnok{main_result_smooth_exp}, and \eqnok{def_TK_S}, we have
\begin{align}
\tilde {\cal B}_d(N) &\le \frac{4 L V(x_0, x^*)}{\nu N (N+1) (1 - P_{T_1})} + \frac{8\left(M^2+ \sigma^2\right)}{L N (N+1)} \sum_{k=1}^N \frac{k^2}{T_k+3}\nn \\
&\le \frac{6 L}{\nu N (N+1) } + \frac{8\left(M^2+\sigma^2\right)}{L N (N+1)} \sum_{k=1}^N \frac{k^2}{T_k+3} \nn \\
&\le \frac{2L}{N (N+1) } \left[ \frac{3 V(x_0, x^*)}{\nu} + 4 \tilde D \right], \label{bnd_Bd_s}
\end{align}
which, in view of Theorem~\ref{main_theorem_s}.a), then clearly implies
\eqnok{stoch_corol1_a}.
Now observe that by the definition of $\gamma_k$ in \eqnok{spec_alpha_theta} and relation \eqnok{compute_Gamma},
\begin{align*}
\sum_{i=1}^{T_k} &
\left[\frac{\gamma_k P_{T_k}}{\Gamma_k (1 - P_{T_k}) p_i P_{i-1}}\right]^2 
= \left( \frac{2k}{T_k(T_k+3)}\right)^2  \sum_{i=1}^{T_k} (i+1)^2 \nn \\
&= \left( \frac{2k}{T_k(T_k+3)}\right)^2 \frac{(T_k +1) (T_k +2) (2 T_k + 3)}{6} 
\le \frac{8 k^2}{3 T_k},
\end{align*}
which together with \eqnok{compute_Gamma}, \eqnok{bnd_2nd}, and \eqnok{def_BP} then imply that
\begin{align*}
\tilde {\cal B}_p(N) &\le \frac{2 \sigma}{N (N+1)} \left[\frac{2 \bar V(x^*)}{\nu} \sum_{k=1}^N \frac{8 k^2}{3 T_k} \right]^\frac{1}{2}
+ \frac{8 \sigma^2}{L N (N+1)} \sum_{k=1}^N \frac{k^2}{T_k+3}\\
&\le \frac{2 \sigma}{N(N+1)}\left[\frac{16 \tilde D L^2 \bar V(x^*)}{3 \nu (M^2 + \sigma^2)}\right]^\frac{1}{2}
+ \frac{8 \tilde D L \sigma^2}{N(N+1) (M^2 + \sigma^2)}\\
&\le \frac{8 L}{N (N+1)} \left(\frac{\sqrt{\tilde D \bar V(x^*)}}{\sqrt{3 \nu}} + \tilde D \right).
\end{align*}
Using the above inequality, \eqnok{bnd_Bd_s}, Theorem~\ref{main_theorem_s}.b), we obtain
\eqnok{stoch_corol1_b}.

We now show that part b) holds.
Note that $P_t$ and $\Gamma_k$ are given by \eqnok{compute_Pt} and \eqnok{compute_Gamma1}, respectively.
It then follows from \eqnok{rel_beta} and \eqnok{rel_beta1} that both \eqnok{cond_gamma_beta} and \eqnok{cond_gamma_beta_alpha_bnd} hold.
Using \eqnok{bnd_2nd_b}, the definitions of $\gamma_k$ and $\beta_k$ in \eqnok{spec_alpha_theta_bnd}, 
\eqnok{def_TK_S_bnd}, and Theorem~\ref{main_theorem_s}.c), we conclude that
\begin{align}
\bbe\left[\Psi(\bx_N) - \Psi(x^*)\right] 
&\le \frac{\gamma_N \beta_N \bar V(x^*)}{(1 - P_{T_N})} + \frac{\Gamma_N(M^2 + \sigma^2)}{\nu} \sum_{k=1}^N  
\sum_{i=1}^{T_k} \frac{\gamma_k P_{T_k}}{\beta_k \Gamma_k (1 - P_{T_k}) p_i^2 P_{i-1}} \nn \\
&\le \frac{\gamma_N \beta_N \bar V(x^*)}{(1 - P_{T_N})} + \frac{16 L \tilde D}{3 \nu  (N+1) (N+2)} \nn \\
&\le  \frac{L}{(N+1) (N+2)} \left(\frac{27 \bar V(x^*)}{2 \nu} + \frac{16 \tilde D}{3} \right). \label{bnd_exp_cor2}
\end{align}
Now observe that by the definition of $\gamma_k$ in \eqnok{spec_alpha_theta_bnd}, 
the fact that $p_t = t/2$,  \eqnok{compute_Pt},
 and \eqnok{compute_Gamma1}, we have
\begin{align*}
\sum_{i=1}^{T_k} &
\left[\frac{\gamma_k P_{T_k}}{\Gamma_k (1 - P_{T_k}) p_i P_{i-1}}\right]^2 
= \left( \frac{k (k+1)}{T_k(T_k+3)}\right)^2  \sum_{i=1}^{T_k} (i+1)^2 \nn \\
&= \left( \frac{k (k+1)}{T_k(T_k+3)}\right)^2 \frac{(T_k +1) (T_k +2) (2 T_k + 3)}{6} 
\le \frac{8 k^4}{3 T_k},
\end{align*}
which together with \eqnok{compute_Gamma1}, \eqnok{bnd_2nd_b}, and \eqnok{def_BP} then imply that
\begin{align*}
\tilde {\cal B}_p(N) &\le 
\frac{6}{N(N+1)(N+2)} \left[ 
\sigma \left( \frac{2 \bar V(x^*)}{\nu} \sum_{k=1}^N \frac{8 k^4}{3 T_k} \right)^\frac{1}{2} +
\frac{4 \sigma^2}{9 L} \sum_{k=1}^N \frac{k (k+1)^2}{T_k} \right]\\
&= \frac{6}{N(N+1)(N+2)} \left[ 
\sigma \left( \frac{8 \bar V(x^*) \tilde D L^2 N(N+1)}{3 \nu (M^2 +\sigma^2)} \right)^\frac{1}{2} +
\frac{4 \sigma^2 L \tilde D N}{9  (M^2 + \sigma^2)}\right]\\
&\le \frac{6 L}{N(N+2)} \left( \frac{2\sqrt{2 \bar V(x^*) \tilde D}}{\sqrt{3 \nu}} + \frac{4 \tilde D}{9} \right) .
\end{align*}
The relation in \eqnok{stoch_corol2_b} then immediately follows from the above inequality, \eqnok{bnd_exp_cor2}, and Theorem~\ref{main_theorem_s}.c).
\end{proof}

\vgap

Corollary~\ref{cor_bnd_s} below states the complexity of 
the SGS algorithm for finding a stochastic $\epsilon$-solution of \eqnok{cp}, i.e., a point $\bar x \in X$ s.t.
$\bbe[\Psi(\bar x) - \Psi^*] \le \epsilon$ for some $\epsilon > 0$,
as well as a stochastic $(\epsilon, \Lambda)$-solution of \eqnok{cp}, i.e.,
a point $\bar x \in X$ s.t. $\prob\left\{ \Psi(\bar x) - \Psi^* \le \epsilon \right\}  > 1 - \Lambda$
for some $\epsilon > 0$ and $\Lambda \in (0,1)$.
Since this result follows as an immediate consequence of Corollary~\ref{stoch_corol1}, we skipped
the details of its proof.

\begin{corollary} \label{cor_bnd_s}
Suppose that $\{p_t\}$ and $\{\theta_t\}$ are set to \eqnok{spec_theta}.
Also assume that there exists an estimate ${\cal D}_X > 0$ s.t. \eqnok{def_DX} holds.

\begin{itemize}
\item [a)] If 
$\{\beta_k\}$ and $\{\gamma_k\}$ are set to \eqnok{spec_alpha_theta}, and
$\{T_k\}$ is given by \eqnok{def_TK_S} with
$
\tilde D = 3 {\cal D}_X/(4 \nu)
$
for some $N > 0$,
then the number of evaluations for $\nabla f$ and $h'$, respectively, required by the SGS algorithm
to find a stochastic $\epsilon$-solution of \eqnok{cp} can be bounded by
\beq \label{cor_bnd_grad_s}
{\cal O} \left( \sqrt{\frac{L {\cal D}_X}{\nu \epsilon}} \right)
\eeq
and
\beq \label{cor_bnd_subgrad_s}
{\cal O} \left\{\frac{(M^2+\sigma^2) {\cal D}_X }{\nu \epsilon^2}
+ \sqrt{ \frac{L {\cal D}_X}{\nu \epsilon} } \right\}.
\eeq

\item [b)] If in addition, Assumption \eqnok{assum_FO_c} holds,
then 
the number of evaluations for $\nabla f$ and
$h'$, respectively, required by the SGS algorithm
to find a stochastic $(\epsilon,\Lambda)$-solution of \eqnok{cp} can be bounded by
\beq \label{cor_bnd_grad_sp}
{\cal O} \left\{ \sqrt{\frac{L {\cal D}_X}{\nu \epsilon}\max \left(1, \log \frac{1}{\Lambda} \right) }\right\}
\eeq
and
\beq \label{cor_bnd_subgrad_sp}
{\cal O} \left\{\frac{M^2 {\cal D}_X }{\nu \epsilon^2} \max \left(1, \log^2 \frac{1}{\Lambda} \right)
+ \sqrt{ \frac{L {\cal D}_X}{\nu \epsilon} \max \left(1, \log \frac{1}{\Lambda} \right)} \right\}.
\eeq

\item [c)] The above bounds in part a) and b) still hold if $X$ is bounded, $\{\beta_k\}$ and $\{\gamma_k\}$ are set to
\eqnok{spec_alpha_theta_bnd}, and $\{T_k\}$ is given by \eqnok{def_TK_S_bnd}
with
$
\tilde D = 81 {\cal D}_X/(32 \nu).
$
\end{itemize}
\end{corollary}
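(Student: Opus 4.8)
The plan is to read off both complexity estimates as bookkeeping consequences of Corollary~\ref{stoch_corol1}: substitute the prescribed value of $\tilde D$, replace the prox-function quantities $V(x_0,x^*)$ and $\bar V(x^*)$ by their common bound ${\cal D}_X$ supplied by \eqnok{def_DX}, solve the resulting guarantee for the number $N$ of outer iterations (which coincides with the number of gradient evaluations of $\nabla f$), and then count inner iterations through $\sum_{k=1}^N T_k$.

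For part~a), I would put $\tilde D = 3{\cal D}_X/(4\nu)$ into \eqnok{stoch_corol1_a}; since $V(x_0,x^*) \le {\cal D}_X$, the bracketed quantity there is at most $6{\cal D}_X/\nu$, so $\bbe[\Psi(\bx_N) - \Psi^*] \le 12 L {\cal D}_X/(\nu N(N+1))$. Requiring the right-hand side to be at most $\epsilon$ gives $N = {\cal O}(\sqrt{L{\cal D}_X/(\nu\epsilon)})$, which is \eqnok{cor_bnd_grad_s}. For the subgradient count I would use \eqnok{def_TK_S}, the estimate $\lceil a\rceil \le a+1$, and $\sum_{k=1}^N k^2 \le (N+1)^3/3$ to obtain $\sum_{k=1}^N T_k \le N(N+1)^3(M^2+\sigma^2)/(3\tilde D L^2) + N$; inserting $\tilde D$ and the above value of $N$ makes the leading term ${\cal O}((M^2+\sigma^2){\cal D}_X/(\nu\epsilon^2))$ and the residual $N$ the additive ${\cal O}(\sqrt{L{\cal D}_X/(\nu\epsilon)})$ term, i.e.\ \eqnok{cor_bnd_subgrad_s}.

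For part~b) I would instead start from the large-deviation bound \eqnok{stoch_corol1_b}, which is available because $X$ is bounded and Assumption~\eqnok{assum_FO_c} holds, and choose $\lambda = \max\{1, \tfrac{3}{2}\log(2/\Lambda), \sqrt{\tfrac{3}{2}\log(2/\Lambda)}\}$ so that $\exp(-2\lambda^2/3) + \exp(-\lambda) \le \Lambda$. With $\bar V(x^*) \le {\cal D}_X$ and $\tilde D = {\cal O}({\cal D}_X/\nu)$, the threshold in \eqnok{stoch_corol1_b} is ${\cal O}(L{\cal D}_X\lambda/(\nu N(N+1)))$, so demanding it be at most $\epsilon$ gives $N = {\cal O}(\sqrt{L{\cal D}_X\max\{1,\log(1/\Lambda)\}/(\nu\epsilon)})$, which is \eqnok{cor_bnd_grad_sp}. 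Carrying this $N$ through the same $\sum_{k=1}^N T_k$ bound as in part~a) --- where $T_k$ scales with $N$ but not with $\lambda$ --- produces a leading term proportional to $N^4 = {\cal O}((L{\cal D}_X/(\nu\epsilon))^2\max\{1,\log^2(1/\Lambda)\})$, which after substitution of $\tilde D$ is the subgradient bound \eqnok{cor_bnd_subgrad_sp}. Part~c) is the identical computation carried out with Corollary~\ref{stoch_corol1}.b): one uses \eqnok{stoch_corol2_a} and \eqnok{stoch_corol2_b}, sets $\tilde D = 81{\cal D}_X/(32\nu)$, and bounds $\bar V(x^*) \le {\cal D}_X$ in place of $V(x_0,x^*) \le {\cal D}_X$.

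I do not expect any analytic obstacle here, since all the constant tracking has already been done inside Corollary~\ref{stoch_corol1}; the statement even remarks that the result is an immediate consequence. The two points I would still check carefully are (i) that the ceilings in \eqnok{def_TK_S} and \eqnok{def_TK_S_bnd} only add the harmless $+N$ term and do not change the order of $\sum_{k=1}^N T_k$, and (ii) that $\lambda$ in part~b) is chosen so that \emph{both} tail terms $\exp(-2\lambda^2/3)$ and $\exp(-\lambda)$ drop below $\Lambda/2$ simultaneously --- the $\max\{\cdot,1\}$ appearing in the stated bounds is exactly what handles the regime where $\Lambda$ is close to $1$.
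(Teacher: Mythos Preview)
Your proposal is correct and coincides with the paper's approach: the paper explicitly states that Corollary~\ref{cor_bnd_s} ``follows as an immediate consequence of Corollary~\ref{stoch_corol1}'' and skips the details, and your argument---substitute the prescribed $\tilde D$, bound $V(x_0,x^*)$ and $\bar V(x^*)$ by ${\cal D}_X$, solve for $N$, then sum the $T_k$---is exactly this immediate consequence. The two bookkeeping points you flag (ceilings in $T_k$ and the choice of $\lambda$ so both tail terms are small) are the only places where any care is needed, and you handle them correctly.
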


Observe that both bounds in \eqnok{cor_bnd_grad_s}
and \eqnok{cor_bnd_subgrad_s} on the number of evaluations for $\nabla f$ and $h'$ are essentially not improvable. In fact,
to the best of our knowledge, this is the first time that the ${\cal O}(1/\sqrt{\epsilon})$
complexity bound on gradient evaluations has been established in the literature
for stochastic approximation type algorithms applied to solve the composite problem in \eqnok{cp}.


\setcounter{equation}{0}
\section{Generalization to strongly convex and structured nonsmooth optimization} \label{sec_gen}
Our goal in this section is to show that the gradient sliding techniques developed
in Sections~\ref{sec_DGS} and \ref{sec_SGS}
can be further generalized to some other important classes of CP problems.
More specifically, we first study in Subsection~\ref{sec_strong} the composite CP problems in \eqnok{cp} with $f$ being strongly convex,
and then consider in Subsection~\ref{sec_saddle} the case where $f$ is a special nonsmooth function given in a bi-linear saddle point form.
Throughout this section, we assume that
the nonsmooth component $h$ is represented by a $\SO$ (see Section~1). 
It is clear that our discussion covers also the deterministic
composite problems as certain special cases by setting $\sigma = 0$ in
\eqnok{assum_FO_b} and \eqnok{assum_FO_c}.

\subsection{Strongly convex optimization} \label{sec_strong}
In this section, we assume that the smooth component $f$ in \eqnok{cp} is strongly convex, i.e.,
$\exists \mu > 0$ such that
\beq \label{strongcovex}
f(x) \ge f(y) + \langle \nabla f(y), x - y \rangle + \frac{\mu}{2} \|x-y\|^2, \ \ \forall x, y \in X.
\eeq
In addition, throughout this section, we assume that the prox-function grows quadratically so that
\eqnok{quad_grow} is satisfied. 

One way to solve these strongly convex composite problems is to apply the aforementioned accelerated
stochastic approximation algorithm which would require ${\cal O} (1/\epsilon)$
evaluations for $\nabla f$ and $h'$
to find an $\epsilon$-solution of \eqnok{cp}~\cite{GhaLan12-2a,GhaLan13-1}. However, we will show in this subsection that
this bound on the number of evaluations for $\nabla f$ can be significantly reduced
to ${\cal O}(\log (1/\epsilon))$, by properly restarting the
SGS algorithm in Section~\ref{sec_SGS}. This multi-phase stochastic gradient sliding (M-SGS) algorithm 
is formally described as follows.


\begin{algorithm} [H]
	\caption{The multi-phase stochastic gradient sliding (M-SGS) algorithm}
	\label{algGenericMSGS}
	\begin{algorithmic}
\State 
\noindent {\bf Input:} Initial point $y_0 \in X$, iteration limit $N_0$,  and an
initial estimate $\Delta_0$ s.t.
$
\Psi(y_0) - \Psi^* \le \Delta_0.
$

 
\For {$s=1, 2, \ldots, S$ }
\State 
Run the SGS algorithm with $x_0 = y_{s-1}$, $N = N_0$, 
$\{p_t\}$ and $\{\theta_t\}$ in \eqnok{spec_theta}, $\{\beta_k\}$ and $\{\gamma_k\}$ in \eqnok{spec_alpha_theta}, and
$\{T_k\}$ in \eqnok{def_TK_S} with $\tilde D = \Delta_0 / (\nu \mu 2^{s})$, and let 
$y_s$ be its output solution.
\EndFor
\State 
\noindent {\bf Output:} $y_S$.
	\end{algorithmic}
\end{algorithm}

We now establish the main convergence properties of the M-SGS algorithm described above.

\begin{theorem} \label{theorem_strong}
If $
N_0 = \left \lceil 4 \sqrt{2 L/(\nu \mu)} \right \rceil
$ in the MGS algorithm, 
then 
\beq \label{bnd_phase}
\bbe[\Psi(y_s) - \Psi^*] \le \frac{\Delta_0}{2^s}, \ \ s \ge 0.
\eeq
As a consequence, the total number of evaluations for $\nabla f$ and 
$H$, respectively, required by the M-SGS algorithm to find a
stochastic $\epsilon$-solution of \eqnok{cp}
can be bounded by 
\beq \label{bnd_grad_eva_strong}
{\cal O} \left( \sqrt{\frac{L}{\nu \mu}} \log_2 \max \left\{ \frac{\Delta_0}{\epsilon}, 1\right\}\right)
\eeq
and
\beq \label{bnd_grad_eva_strong1}
{\cal O} \left(\frac{M^2+\sigma^2}{\nu \mu \epsilon} + \sqrt{\frac{L}{\nu \mu}} \log_2 \max \left\{ \frac{\Delta_0}{\epsilon}, 1\right\} \right).
\eeq
\end{theorem}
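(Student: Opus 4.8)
The plan is to prove \eqnok{bnd_phase} by induction on $s$, with the base case $s=0$ being exactly the hypothesis $\Psi(y_0)-\Psi^* \le \Delta_0$. For the inductive step, I would apply Corollary~\ref{stoch_corol1}.a) to the $s$-th phase of the algorithm, which runs the SGS algorithm starting from $x_0 = y_{s-1}$ for $N = N_0$ iterations. The key point is that in the strongly convex setting, the distance $V(x_0, x^*)$ at the start of phase $s$ can be controlled by the optimality gap $\Psi(y_{s-1}) - \Psi^*$: since $f$ is $\mu$-strongly convex and the prox-function grows quadratically (so $V(x_0,x^*) \le \frac12 \|x_0 - x^*\|^2$), we have $\Psi(y_{s-1}) - \Psi^* \ge \frac{\mu}{2}\|y_{s-1} - x^*\|^2 \ge \mu \nu V(y_{s-1}, x^*)$ — actually with $\nu$ possibly folded in; I would track the exact constant, but the upshot is $V(y_{s-1}, x^*) \le (\Psi(y_{s-1}) - \Psi^*)/(\nu\mu) \le \Delta_0/(\nu\mu 2^{s-1})$ by the induction hypothesis.

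Next I would substitute this bound, together with the choice $\tilde D = \Delta_0/(\nu\mu 2^s)$ for phase $s$, into the RHS of \eqnok{stoch_corol1_a}, which reads $\bbe[\Psi(y_s) - \Psi^*] \le \frac{2L}{N_0(N_0+1)}\left[\frac{3 V(y_{s-1},x^*)}{\nu} + 4\tilde D\right]$. Plugging in the two bounds gives something like $\frac{2L}{N_0^2}\left[\frac{3\Delta_0}{\nu^2\mu 2^{s-1}} + \frac{4\Delta_0}{\nu\mu 2^s}\right] = \frac{2L\Delta_0}{N_0^2 \nu\mu 2^s}\left[\frac{6}{\nu} + 4\right]$, and I would then verify that the prescribed $N_0 = \lceil 4\sqrt{2L/(\nu\mu)}\rceil$ makes the prefactor $\frac{2L}{N_0^2 \nu\mu}\left[\frac{6}{\nu}+4\right] \le 1$, yielding $\bbe[\Psi(y_s)-\Psi^*] \le \Delta_0/2^s$. (I should be careful here: there may be a normalization where $\nu = 1$ implicitly for the quadratically-growing prox-function, or the constant $16$ in $N_0$ absorbs both the $6/\nu$ and the $4$; I would reconcile this against \eqnok{quad_grow}, but the structure is clear and the constant is chosen precisely to close the recursion.)

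With \eqnok{bnd_phase} established, the complexity bounds \eqnok{bnd_grad_eva_strong} and \eqnok{bnd_grad_eva_strong1} follow by a routine counting argument. To reach a stochastic $\epsilon$-solution it suffices to run $S = \lceil \log_2 \max\{\Delta_0/\epsilon, 1\}\rceil$ phases. The number of gradient evaluations of $\nabla f$ per phase is exactly $N_0 = {\cal O}(\sqrt{L/(\nu\mu)})$, so the total is $S \cdot N_0 = {\cal O}(\sqrt{L/(\nu\mu)}\log_2\max\{\Delta_0/\epsilon,1\})$, giving \eqnok{bnd_grad_eva_strong}. For the subgradient evaluations, I would sum $\sum_{k=1}^{N_0} T_k$ over all $S$ phases using the definition of $T_k$ in \eqnok{def_TK_S} with the phase-dependent $\tilde D = \Delta_0/(\nu\mu 2^s)$: the leading term is $\sum_{k=1}^{N_0}\frac{N_0(M^2+\sigma^2)k^2}{\tilde D L^2} = {\cal O}\big(\frac{N_0^4 (M^2+\sigma^2)\nu\mu 2^s}{\Delta_0 L^2}\big) = {\cal O}\big(\frac{(M^2+\sigma^2)2^s}{\nu\mu \Delta_0}\big)$ after substituting $N_0^4 = {\cal O}(L^2/(\nu\mu)^2)$, and this is a geometric series in $s$ dominated by its last term $s = S$, i.e., ${\cal O}\big(\frac{(M^2+\sigma^2)}{\nu\mu\epsilon}\big)$; adding the $S N_0$ lower-order term from the $\lceil\cdot\rceil$ rounding gives \eqnok{bnd_grad_eva_strong1}.

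The main obstacle I anticipate is getting the constants in the inductive step to line up — specifically, verifying that $N_0 = \lceil 4\sqrt{2L/(\nu\mu)}\rceil$ (together with the factor-of-$2$ in $\tilde D$ and the factors $3$ and $4$ inside \eqnok{stoch_corol1_a}) is exactly enough to contract the error by a factor of $2$ per phase. This is purely arithmetic but requires care about whether the normalization \eqnok{quad_grow} forces $\nu$-dependences to cancel; a secondary subtlety is the geometric-sum step in the subgradient count, where one must check that the phase-dependent growth of $T_k$ (increasing like $2^s$) is exactly balanced so that the total is dominated by the final phase rather than blowing up.
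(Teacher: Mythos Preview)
Your proposal is correct and follows essentially the same route as the paper's proof: induct on $s$ using Corollary~\ref{stoch_corol1}.a), bound $V(y_{s-1},x^*)$ via strong convexity of $\Psi$ together with \eqnok{quad_grow}, verify that the choice of $N_0$ contracts the gap by a factor $2$, and then count phases and sum the geometric series in $T_k$. The only slip is in your constant tracking: the upper bound on $V$ comes from \eqnok{quad_grow} (not from the modulus $\nu$ of $\omega$, which gives a \emph{lower} bound on $V$), so the correct relation is $V(y_{s-1},x^*)\le \tfrac{1}{\mu}\bigl(\Psi(y_{s-1})-\Psi^*\bigr)$ with no extra $1/\nu$; the $1/\nu$ in the final recursion enters only through the $3V/\nu$ term in \eqnok{stoch_corol1_a}.
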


\begin{proof}
We show \eqnok{bnd_phase} by induction. Note that \eqnok{bnd_phase} clearly holds for $s=0$
by our assumption on $\Delta_0$. Now assume that \eqnok{bnd_phase} holds at phase $s-1$, i.e., $\Psi(y_{s-1}) - \Psi^* \le \Delta_0 / 2^{(s-1)}$
for some $s \ge 1$. In view of Corollary~\ref{stoch_corol1} and the definition of $y_s$, we have
\begin{align*}
\bbe[\Psi(y_s) - \Psi^*| y_{s-1}] &\le \frac{2 L}{N_0 (N_0+1)} \left[\frac{3 V(y_{s-1}, x^*)}{\nu} + 4 \tilde D \right]\\
& \le \frac{2 L}{N_0^2} \left[\frac{6}{\nu \mu} (\Psi(y_{s-1}) - \Psi^*) + 4 \tilde D \right].
\end{align*}
where the second inequality follows from the strong convexity of $\Psi$ and \eqnok{quad_grow}.
Now taking expectation on both sides of the above inequality w.r.t. $y_{s-1}$, and using
the induction hypothesis and the definition of $\tilde D$ in
the M-SGS algorithm, we conclude that
\[
\bbe[\Psi(y_s) - \Psi^*] \le \frac{2L}{N_0^2} \frac{8 \Delta_0}{\nu \mu 2^{s-1}}  \le \frac{\Delta_0}{2^{s}},
\]
where the last inequality follows from 
the definition of $N_0$. Now, by \eqnok{bnd_phase}, the total number of phases
performed by the M-SGS algorithm can be bounded by
$S = \lceil \log_2 \max \left\{ \frac{\Delta_0}{\epsilon}, 1\right\} \rceil$. Using this observation,
we can easily see that 
the total number of gradient evaluations of $\nabla f$ is given by $N_0 S$, which is bounded by
\eqnok{bnd_grad_eva_strong}.  Now let us provide a bound on total number
of stochastic subgradient evaluations of $h'$.
Without loss of generality, let us assume that $\Delta_0 > \epsilon$.
Using the previous bound on $S$ and the definition of $T_k$,
the total number of stochastic subgradient evaluations of $h'$ can be bounded by
\begin{align*}
\sum_{s=1}^S \sum_{k=1}^{N_0} T_k &\le \sum_{s=1}^S \sum_{k=1}^{N_0} \left( \frac{\nu \mu N_0 (M^2+\sigma^2) k^2}{\Delta_0 L^2} 2^{s} +1\right) \\
&\le \sum_{s=1}^S \left[\frac{\nu \mu N_0 (M^2+\sigma^2)}{3 \Delta_0 L^2}(N_0+1)^3 2^{s}  + N_0\right]\\
& \le \frac{\nu \mu N_0 (N_0 + 1)^3 (M^2+\sigma^2)}{3 \Delta_0 L^2} 2^{S+1} + N_0 S\\
&\le \frac{4 \nu \mu N_0 (N_0 + 1)^3 (M^2+\sigma^2)}{3 \epsilon L^2} + N_0 S.
\end{align*}
This observation, in view of the definition of $N_0$, then clearly implies the bound in \eqnok{bnd_grad_eva_strong1}.
\end{proof}

\vgap

We now add a few remarks about the results obtained in
Theorem~\ref{theorem_strong}. Firstly, the M-SGS algorithm possesses optimal complexity bounds
in terms of the number of gradient evaluations for $\nabla f$ and subgradient evaluations for $h'$,
while existing algorithms only exhibit optimal complexity bounds on the number of
stochastic subgradient evaluations (see~\cite{GhaLan13-1}).
Secondly, in Theorem~\ref{theorem_strong}, we only establish the optimal convergence of the 
M-SGS algorithm in expectation. It is also possible to establish the optimal convergence of
this algorithm with high probability by making use of the light-tail assumption in \eqnok{assum_FO_c}
and a domain shrinking procedure similarly to the one studied in Section~3 of \cite{GhaLan13-1}.

\subsection{Structured nonsmooth problems} \label{sec_saddle}
Our goal in this subsection is to further generalize the gradient sliding algorithms
to the situation
when $f$ is nonsmooth, but can be closely approximated by a certain smooth convex function.

More specifically, we assume that
$f$ is given in the form of
\beq \label{saddleclass}
f(x) = \max_{y \in Y} \langle Ax, y \rangle - J(y),
\eeq
where $A:\bbr^n \to \bbr^m$ denotes a linear operator, $Y$ is a closed convex set,
and $J: Y \rightarrow \R $ is a relatively simple, proper,
convex, and lower semi-continuous (l.s.c.) function (i.e., problem \eqnok{sm-approx} below is easy to solve). 
Observe that if $J$ is the convex conjugate of some convex function $F$ and $Y \equiv {\cal Y}$,
then problem~\eqnok{cp} with $f$ given in \eqnok{saddleclass} can be written
equivalently as 
\[
\min_{x \in X} h(x) + F(Ax),
\] 
Similarly to the previous subsection, we focus on the situation when $h$ is represented by a $\SO$.
Stochastic composite problems in this form have wide applications in machine learning,
for example, to minimize
the regularized loss function of
\[
\min_{x \in X} \bbe_\xi[l(x, \xi)] + F(Ax),
\]
where $l(\cdot, \xi)$ is a convex loss function for any $\xi \in \Xi$
and $F(Kx)$ is a certain regularization (e.g.,  low rank tensor~\cite{KolBad09-1,TSHK11-1},
overlapped group lasso~\cite{JOV09,MaJeObBa11-1}, and graph regularization~\cite{JOV09,TSRZK05-1}).

Since $f$ in \eqnok{saddleclass} is nonsmooth, we cannot directly apply the
gradient sliding methods developed in the previous sections. However, as shown by Nesterov~\cite{Nest05-1}, the function $f(\cdot)$
in \eqnok{saddleclass} can be closely approximated by a class of smooth 
convex functions. More specifically,
for a given strongly convex function $v:Y \to \bbr$ such that
\beq \label{strong_v}
v(y) \ge v(x) + \langle \nabla v(x), y - x \rangle + \frac{\nu'}{2} \|y -x \|^2, \forall x, y \in Y
\eeq
for some $\nu' > 0$,
let us denote $c_v := \argmin_{y \in Y} v(y)$, $d(y) := v(y) - v(c_v) - \langle \nabla v(c_v), y - c_v \rangle$ and
\beq \label{def_cal_DX}
{\cal D}_{Y} := \max_{y \in Y} d(y).
\eeq 
Then the function $f(\cdot)$ in \eqnok{saddleclass} can be closely approximated by
\beq \label{sm-approx}
f_\eta(x) := \max\limits_{y}\left\{\langle A x, y \rangle - 
	J(y)-\eta \, d(y): \ y \in Y \right\}.
\eeq
Indeed, by definition we have $0 \le V(y) \le {\cal D}_{Y}$ and hence, for any
$\eta \ge 0$,
\beq \label{closeness1}
f(x) - \eta {\cal D}_Y \le f_\eta(x) \le f(x), \ \ \ \forall x \in X.
\eeq
Moreover, Nesterov~\cite{Nest05-1} shows that $f_{\eta}(\cdot)$ is differentiable and
its gradients are Lipschitz continuous with the Lipschitz constant given by 
\beq \label{new_ls}
{\cal L}_\eta := \frac{\|A\|^2}{\eta \nu'}.
\eeq

\vgap
We are now ready to present a smoothing stochastic gradient sliding (S-SGS)
method and study its convergence properties.

\begin{theorem} \label{the_SSGS}
Let $(\bx_k, x_k)$ be the search points generated by
a smoothing stochastic gradient sliding (S-SGS) method,
which is obtained by replacing $f$ with $f_\eta(\cdot)$ 
in the definition of $g_k$ in the SGS method.
Suppose that $\{p_t\}$ and $\{\theta_t\}$ in the $\SProxS$ procedure 
are set to \eqnok{spec_theta}. 
Also assume that $\{\beta_k\}$ and
$\{\gamma_k\}$ are set to \eqnok{spec_alpha_theta} and that $T_k$ is given by
\eqnok{def_TK_S}  with $\tilde D = 3 {\cal D}_X/(4 \nu)$ for some $N \ge 1$,
where ${\cal D}_X$ is given by \eqnok{def_DX}.
If  
\[
\eta = \frac{2 \|A\|}{N} \sqrt{\frac{3 {\cal D}_X}{\nu \nu' {\cal D}_{Y}} },
\]
then the total number of outer iterations and inner iterations performed by the
S-SGS algorithm to find an $\epsilon$-solution of \eqnok{cp} can be bounded by
\beq \label{S-SGS_bound1}
{\cal O} \left( \frac{\|A\|  \sqrt{{\cal D}_{X} {\cal D}_{Y} }}{\epsilon \sqrt{\nu \nu' } } \right)
\eeq
and \label{SGS_bound2}
\beq
{\cal O} \left\{ 
\frac{(M^2+\sigma^2) \|A\|^2 V(x_0, x^*)}{\nu \epsilon^2} +  \frac{\|A\|  \sqrt{{\cal D}_{Y} V(x_0,x^*)}}{\sqrt{\nu \nu' } \epsilon }
\right\},
\eeq
respectively.
\end{theorem}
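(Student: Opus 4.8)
The plan is to regard the S-SGS method as the SGS algorithm of Section~\ref{sec_SGS} run on the \emph{smoothed} composite problem $\min_{x\in X}\Psi_\eta(x)$ with $\Psi_\eta(x):=f_\eta(x)+h(x)+\cX(x)$ (and $\Psi_\eta^*:=\min_{x\in X}\Psi_\eta(x)$), where $f_\eta$ is the approximation in \eqnok{sm-approx}, and then to choose $\eta$ optimally. First I would observe that $f_\eta$ is a pointwise maximum of affine functions of $x$, hence convex, that by \eqnok{new_ls} it is Lipschitz-smooth with constant ${\cal L}_\eta=\|A\|^2/(\eta\nu')$, and that $h$ is still queried through the same $\SO$ while, by hypothesis \eqnok{def_DX}, ${\cal D}_X$ bounds $V$ on $X$. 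Hence $\min_x\Psi_\eta(x)$ is an instance of \eqnok{cp} to which Corollary~\ref{stoch_corol1}.a) applies, with the constant $L$ there equal to ${\cal L}_\eta$ and with $\tilde D=3{\cal D}_X/(4\nu)$, so \eqnok{stoch_corol1_a} gives
\[
\bbe[\Psi_\eta(\bx_N)-\Psi_\eta^*]\le\frac{2{\cal L}_\eta}{N(N+1)}\left[\frac{3V(x_0,x^*)}{\nu}+\frac{3{\cal D}_X}{\nu}\right]\le\frac{12\|A\|^2{\cal D}_X}{\eta\nu\nu' N(N+1)},
\]
the last step using $V(x_0,x^*)\le{\cal D}_X$ and the value of ${\cal L}_\eta$.

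Next I would transfer this estimate to the true objective via the uniform closeness relation \eqnok{closeness1}: since $f_\eta\le f\le f_\eta+\eta{\cal D}_Y$ on $X$, we have $\Psi_\eta^*\le\Psi^*$ and $\Psi(\bx_N)\le\Psi_\eta(\bx_N)+\eta{\cal D}_Y$, whence
\[
\bbe[\Psi(\bx_N)-\Psi^*]\le\bbe[\Psi_\eta(\bx_N)-\Psi_\eta^*]+\eta{\cal D}_Y\le\frac{12\|A\|^2{\cal D}_X}{\eta\nu\nu' N^2}+\eta{\cal D}_Y.
\]
For $N$ held fixed this is the elementary convex function $a/\eta+b\eta$ of $\eta$, minimized at $\eta=\sqrt{a/b}=\tfrac{2\|A\|}{N}\sqrt{3{\cal D}_X/(\nu\nu'{\cal D}_Y)}$ — precisely the prescribed value — with minimum $2\sqrt{ab}=\tfrac{4\|A\|}{N}\sqrt{3{\cal D}_X{\cal D}_Y/(\nu\nu')}={\cal O}\!\bigl(\|A\|\sqrt{{\cal D}_X{\cal D}_Y}/(N\sqrt{\nu\nu'})\bigr)$. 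Requiring this quantity to be at most $\epsilon$ then forces $N={\cal O}\!\bigl(\|A\|\sqrt{{\cal D}_X{\cal D}_Y}/(\sqrt{\nu\nu'}\,\epsilon)\bigr)$, which is the asserted bound \eqnok{S-SGS_bound1} on the number of outer iterations, each of which costs one evaluation of $\nabla f_\eta$ (one solve of \eqnok{sm-approx}).

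It remains to count inner iterations, which I would do by summing the sliding lengths $T_k$ in \eqnok{def_TK_S} (with $L={\cal L}_\eta$):
\[
\sum_{k=1}^N T_k\le\frac{N(M^2+\sigma^2)}{\tilde D\,{\cal L}_\eta^2}\sum_{k=1}^N k^2+N,
\]
and then inserting $\tilde D=3{\cal D}_X/(4\nu)$, the identity ${\cal L}_\eta^2=\|A\|^2N^2\nu{\cal D}_Y/(12\nu'{\cal D}_X)$ entailed by the chosen $\eta$, and finally the bound on $N$ just obtained; the leading term then reduces to ${\cal O}\!\bigl((M^2+\sigma^2){\cal D}_X/(\nu\epsilon^2)\bigr)$ and the $+N$ term to the lower-order ${\cal O}\!\bigl(\|A\|\sqrt{{\cal D}_X{\cal D}_Y}/(\sqrt{\nu\nu'}\,\epsilon)\bigr)$, giving the claimed inner-iteration complexity. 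The one place that requires care is the nested dependence ${\cal L}_\eta={\cal L}_\eta(\eta)$, $\eta=\eta(N)$, $T_k=T_k({\cal L}_\eta,N)$, $N=N(\epsilon)$; this is resolved exactly as above — write the expected accuracy as an explicit function of the two free parameters $(\eta,N)$, optimize over $\eta$ with $N$ frozen, and only then solve for $N$ — and everything else is the routine check that Corollary~\ref{stoch_corol1}.a) does apply to the smoothed problem, which \eqnok{sm-approx}, \eqnok{new_ls} and \eqnok{def_DX} guarantee.
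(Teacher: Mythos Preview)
Your proposal is correct and follows essentially the same route as the paper: apply Corollary~\ref{stoch_corol1}.a) to the smoothed problem $\min_{x\in X}\Psi_\eta(x)$ with Lipschitz constant ${\cal L}_\eta=\|A\|^2/(\eta\nu')$, transfer the bound back to $\Psi$ via \eqnok{closeness1}, substitute the stated $\eta$ (you additionally show it arises from minimizing $a/\eta+b\eta$, which the paper omits), read off the outer-iteration count, and then sum the $T_k$ from \eqnok{def_TK_S}. Your leading inner-iteration term simplifies to ${\cal O}\bigl((M^2+\sigma^2){\cal D}_X/(\nu\epsilon^2)\bigr)$ rather than the form printed in \eqnok{SGS_bound2}, but this is a discrepancy in the theorem's display, not in your argument---the paper's own proof stops at ``combining these two observations'' without carrying out the final substitution.
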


\begin{proof}
Let us denote $\Psi_\eta(x) = f_\eta(x) + h(x) + \cX(x)$. In view of \eqnok{stoch_corol1_a} and \eqnok{new_ls}, we have
\begin{align*}
\bbe[\Psi_\eta(\bx_N) - \Psi_\eta(x)] &\le \frac{2 L_\eta }{N(N+1)}
\left[ \frac{3 V(x_0, x)}{\nu} + 4  \tilde D\right] \\ 
&= \frac{2 \|A\|^2 }{\eta \nu' N(N+1)}
\left[ \frac{3 V(x_0, x)}{\nu} + 4  \tilde D\right], \ \ \forall x \in X, \, N \ge 1.
\end{align*}
Moreover, it follows from \eqnok{closeness1} that
\[
\Psi_\eta(\bx_N) - \Psi_\eta(x) \ge \Psi(\bx_N) - \Psi(x) - \eta {\cal D}_{Y}.
\]
Combining the above two inequalities, we obtain
\[ 
\bbe[\Psi(\bx_N) - \Psi(x)] \le \frac{2 \|A\|^2 }{\eta \nu' N(N+1)}
\left[ \frac{3 V(x_0, x)}{\nu} + 4  \tilde D\right] + \eta {\cal D}_{Y}, \ \ \forall x \in X,
\]
which implies that
\beq \label{main_S-SGS}
\bbe[\Psi(\bx_N) - \Psi(x^*)] \le \frac{2 \|A\|^2 }{\eta \nu' N(N+1)}
\left[ \frac{3 {\cal D}_X}{\nu} + 4  \tilde D\right] + \eta {\cal D}_{Y}.
\eeq
Plugging the value of $\tilde D$ and $\eta$ into the above bound, 
we can easily see that 
\begin{align*}
\bbe[\Psi(\bx_N) - \Psi(x^*)]  &\le  \frac{4 \sqrt{3} \|A\| \sqrt{ {\cal D}_X {\cal D}_{Y} }}{\sqrt{\nu \nu' } N}, \ \ \forall x \in X, \, N \ge 1.
\end{align*}
It then follows from the above relation that the total number of outer iterations
to find an $\epsilon$-solution of problem \eqnok{saddleclass}
can be bounded by
\[
\bar N(\epsilon) = \frac{4 \sqrt{3} \|A\|  \sqrt{{\cal D}_{X} {\cal D}_{Y} }}{\sqrt{\nu \nu' } \epsilon}.
\]
Now observe that the total number of inner iterations is bounded by
\[
\sum_{k=1}^{\bar N(\epsilon)} T_k
= \sum_{k=1}^{\bar N(\epsilon)} \left[ \frac{(M^2+\sigma^2) \bar N(\epsilon) k^2}{\tilde D L_\eta^2} + 1\right]
= \sum_{k=1}^{\bar N(\epsilon)} \left[ \frac{(M^2+\sigma^2) \bar N(\epsilon) k^2}{\tilde D L_\eta^2} + 1 \right].
\]
Combining these two observations, we conclude that
the total number of inner iterations is bounded by \eqnok{SGS_bound2}.
\end{proof}

\vgap

In view of Theorem~\ref{the_SSGS}, by using the smoothing SGS algorithm, we can significantly reduce the 
number of outer iterations, and hence the number of times to access the linear operator $A$ and $A^T$,
from ${\cal O}(1/\epsilon^2)$ to ${\cal O}(1/\epsilon)$ in order to find an $\epsilon$-solution of \eqnok{cp},
while still maintaining the optimal bound on the total number of stochastic subgradient evaluations
for $h'$. It should be noted that, by using the result in Theorem~\ref{main_theorem_s}.b), we can show that
the aforementioned savings on the access to the linear operator $A$ and $A^T$ also hold with
overwhelming probability under the light-tail assumption in \eqnok{assum_FO_c} associated
with the $\SO$.  

\setcounter{equation}{0}
\section{Concluding remarks} \label{sec_remark}
In this paper, we present a new class of first-order method which can significantly reduce
the number of gradient evaluations for $\nabla f$ required to solve the composite problems in \eqnok{cp}. 
More specifically,
we show that by using these algorithms, the total number of gradient evaluations can be significantly reduced
from ${\cal O}(1/\epsilon^2)$ to ${\cal O}(1/\sqrt{\epsilon})$. As a result, these algorithms have the potential
to significantly accelerate first-order methods for solving
the composite problem in \eqnok{cp}, especially when the bottleneck exists in the computation  (or communication
in the case of distributed computing) of the gradient
of the smooth component, as happened in many applications. We also establish similar complexity bounds for solving an important class of stochastic 
composite optimization problems by developing the stochastic gradient sliding methods.
By properly restarting the gradient sliding algorithms,
we demonstrate that 
dramatic saving on gradient evaluations (from ${\cal O}(1/\epsilon)$ to ${\cal O}(\log (1/\epsilon)$) can be achieved 
for solving 
strongly convex problems.
 Generalization to the case when $f$ is nonsmooth but
possessing a bilinear saddle point structure has also been discussed.

It should be pointed out that this paper focuses only on theoretical studies for the convergence properties
associated with the gradient sliding algorithms. The practical performance for these algorithms, however, will certainly
depend on our estimation for a few problem parameters, e.g., the Lipschitz constants $L$ and $M$.
In addition, the sliding periods $\{T_k\}$ in both GS and SGS
have been specified in a conservative way to obtain the optimal complexity bounds for gradient and subgradient 
evaluations. We expect that the practical performance of these 
algorithms will be further improved with proper incorporation of certain adaptive search procedures on $L$, $M$, and
$\{T_k\}$, which will be very interesting research topics in the future.

\bibliographystyle{plain}
\bibliography{../glan-bib}

\newcommand{\noopsort}[1]{} \newcommand{\printfirst}[2]{#1}
  \newcommand{\singleletter}[1]{#1} \newcommand{\switchargs}[2]{#2#1}
\begin{thebibliography}{10}

\bibitem{AuTe06-1}
A.~Auslender and M.~Teboulle.
\newblock Interior gradient and proximal methods for convex and conic
  optimization.
\newblock {\em SIAM Journal on Optimization}, 16:697--725, 2006.

\bibitem{BBC03-1}
H.H. Bauschke, J.M. Borwein, and P.L. Combettes.
\newblock Bregman monotone optimization algorithms.
\newblock {\em SIAM Journal on Controal and Optimization}, 42:596--636, 2003.

\bibitem{BecTeb09-2}
A.~Beck and M.~Teboulle.
\newblock A fast iterative shrinkage-thresholding algorithm for linear inverse
  problems.
\newblock {\em SIAM J. Imaging Sciences}, 2:183--202, 2009.

\bibitem{Breg67}
L.M. Bregman.
\newblock The relaxation method of finding the common point convex sets and its
  application to the solution of problems in convex programming.
\newblock {\em USSR Comput. Math. Phys.}, 7:200--217, 1967.

\bibitem{GhaLan12-2a}
S.~Ghadimi and G.~Lan.
\newblock Optimal stochastic approximation algorithms for strongly convex
  stochastic composite optimization, {I}: a generic algorithmic framework.
\newblock {\em SIAM Journal on Optimization}, 22:1469--1492, 2012.

\bibitem{GhaLan13-1}
S.~Ghadimi and G.~Lan.
\newblock Optimal stochastic approximation algorithms for strongly convex
  stochastic composite optimization, {II}: shrinking procedures and optimal
  algorithms.
\newblock {\em SIAM Journal on Optimization}, 23:2061--2089, 2013.

\bibitem{JOV09}
L.~Jacob, G.~Obozinski, and J.-P. Vert.
\newblock Group lasso with overlap and graph lasso.
\newblock In {\em Proceedings of the 26th International Conference on Machine
  Learning}, 2009.

\bibitem{jnt08}
A.~Juditsky, A.~S. Nemirovski, and C.~Tauvel.
\newblock Solving variational inequalities with stochastic mirror-prox
  algorithm.
\newblock Manuscript, Georgia Institute of Technology, Atlanta, GA, 2011.

\bibitem{Kiw97-1}
K.C. Kiwiel.
\newblock Proximal minimization methods with generalized bregman functions.
\newblock {\em SIAM Journal on Controal and Optimization}, 35:1142--1168, 1997.

\bibitem{KolBad09-1}
T.~G. Kolda and B.~W. Bader.
\newblock Tensor decompositions and applications.
\newblock {\em SIAM Review}, 51(3):455--500, 2009.

\bibitem{Lan10-3}
G.~Lan.
\newblock An optimal method for stochastic composite optimization.
\newblock {\em Mathematical Programming}, 133(1):365--397, 2012.

\bibitem{Lan13-2}
G.~Lan.
\newblock Bundle-level type methods uniformly optimal for smooth and non-smooth
  convex optimization.
\newblock Manuscript, Department of Industrial and Systems Engineering,
  University of Florida, Gainesville, FL 32611, USA, January 2013.
\newblock {\sl Mathematical Programming} (to appear).

\bibitem{lns11}
G.~Lan, A.~S. Nemirovski, and A.~Shapiro.
\newblock Validation analysis of mirror descent stochastic approximation
  method.
\newblock {\em Mathematical Programming}, 134:425--458, 2012.

\bibitem{MaJeObBa11-1}
J.~Mairal, R.~Jenatton, G.~Obozinski, and F.~Bach.
\newblock Convex and network flow optimization for structured sparsity.
\newblock {\em Journal of Machine Learning Research}, 12:2681--2720, 2011.

\bibitem{Nest83-1}
Y.~E. Nesterov.
\newblock A method for unconstrained convex minimization problem with the rate
  of convergence {$O(1/k^2)$}.
\newblock {\em Doklady AN SSSR}, 269:543--547, 1983.

\bibitem{Nest04}
Y.~E. Nesterov.
\newblock {\em Introductory Lectures on Convex Optimization: a basic course}.
\newblock Kluwer Academic Publishers, Massachusetts, 2004.

\bibitem{Nest05-1}
Y.~E. Nesterov.
\newblock Smooth minimization of nonsmooth functions.
\newblock {\em Mathematical Programming}, 103:127--152, 2005.

\bibitem{Nest07-1}
Y.~E. Nesterov.
\newblock Gradient methods for minimizing composite objective functions.
\newblock Technical report, Center for Operations Research and Econometrics
  (CORE), Catholic University of Louvain, September 2007.

\bibitem{TSRZK05-1}
R.~Tibshirani, M.~Saunders, S.~Rosset, J.~Zhu, and K.~Knight.
\newblock Sparsity and smoothness via the fused lasso.
\newblock {\em Journal of Royal Statistical Society: B}, 67(1):91--108, 2005.

\bibitem{TSHK11-1}
R.~Tomioka, T.~Suzuki, K.~Hayashi, and H.~Kashima.
\newblock Statistical performance of convex tensor decomposition.
\newblock {\em Advances in Neural Information Processing Systems}, 25, 2011.

\bibitem{tseng08-1}
P.~Tseng.
\newblock On accelerated proximal gradient methods for convex-concave
  optimization.
\newblock Manuscript, University of Washington, Seattle, May 2008.

\end{thebibliography}

\end{document}